\renewcommand{\epsilon}{\varepsilon}
\renewcommand{\hat}{\widehat}
\renewcommand{\tilde}{\widetilde}
\renewcommand{\bar}{\overline}
\DeclareMathOperator{\supp}{supp}
\def\pt{\partial_t}
\def\Laq{(-\Delta)^{\frac{1}{4}}}
\def\Lah{(-\Delta)^{\frac{1}{2}}}
\newcommand{\besov}{\dot{B}^{\frac{n}{2}}_{2,1}}
\newcommand{\R}{\mathbb{R}}
\newcommand{\Hy}{\mathbb{H}^2}
\newcommand{\be}{\begin{equation}}
\newcommand{\ee}{\end{equation}}
\def\XXint#1#2#3{{\setbox0=\hbox{$#1{#2#3}{\int}$}
\vcenter{\hbox{$#2#3$}}\kern-.5\wd0}}
         \title[]{Global Weak Solutions for the Half-Wave Maps Equation in $\R$}
         \date{}
\author[]{Yang Liu}
\address{B\^atiment des Math\'ematiques \\ EPFL \\ Station 8 \\ 1015 Lausanne \\ Switzerland}
\email{y.liu@epfl.ch}
\begin{document}
        \maketitle
\begin{abstract}
    We establish the existence of weak global solutions of the half-wave maps equation with the target $S^2$ on $\R^{1+1}$ with large initial data in $\dot{H}^1 \cap \dot{H}^{\frac{1}{2}}(\R)$. We first prove the global well-posedness of a regularized equation. Then we show that the weak limit of the regularized solutions is a weak solution of the half-wave maps equation as the regularization parameter $\varepsilon \rightarrow 0$.
\end{abstract}        
\section{Introduction}

Let $u: \R^{1+1}\rightarrow S^2 \subseteq \R^3$ be smooth and bounded with the property that $\nabla_{t,x}u(t,\cdot)\in L^r(\R^n)$ for some $r\in (1,\infty)$, and furthermore $\lim_{|x|\rightarrow +\infty}u(t, x) = Q$ for some fixed $Q\in M$, for each $t$. We define the operator $\Lah u = -\sum_{j=1}^n (-\triangle)^{-\frac12}\partial_j (\partial_j u)$. The Cauchy problem of {\it{half-wave map}} is given by

\begin{equation}
    \label{eq3:halfwave}
\begin{cases}
    &\partial_t u  = u\times \Lah u:=f(x,t) \\
    &u(0,x)= u_0: \R \rightarrow S^2,
\end{cases}
\end{equation}
where $u_0 \in \dot{H}^1 \cap \dot{H}^{\frac{1}{2}}(\R, S^2)$ is a smooth and constant outside of a compact domain (this ensures that $\Lah u_0$ is well-defined).


We consider the Cauchy problem \eqref{eq3:halfwave} with large data in $\dot{H}^1 \cap \dot{H}^{\frac{1}{2}}(\R)$. We show that there exists a weak solution to the half-wave equation \eqref{eq3:halfwave} in $L_{t,loc}^2([0,\infty),$ $ \dot{H}^{\frac{1}{2}}(\R, S^2))$ for smooth initial data $u_0 \in \dot{H}^1 \cap \dot{H}^{\frac{1}{2}} (\R, S^2)$.

We say $u$ is a global weak solution for \eqref{eq3:halfwave} if it satisfies

\begin{equation}
    \label{weaksol}
        -\int_0^\infty \int_{\R} u \cdot \varphi_t dx dt - \int_{\R} u_0 \varphi(x) dx = \int_0^\infty \int_{\R} \Laq (u  \times \varphi ) \cdot \Laq u \  dx dt,
\end{equation}

for all $\varphi \in C_c^\infty(\R, S^2).$

\begin{theorem}
    \label{thm:main}
   Let $u_0 \in \dot{H}^1 \cap \dot{H}^{\frac{1}{2}} (\R, S^2)$ be a smooth initial data and constant outside of a compact domain s.t. $\lim_{|x|\rightarrow \infty} \ u_0(x) =Q$ for a fixed $Q\in S^2$. Then the Cauchy problem \eqref{eq3:halfwave} admits a global weak solution  $u \in L_{t,loc}^2([0,\infty),\dot{H}^{\frac{1}{2}}(\R, S^2))$ such that $\lim_{|x|\rightarrow \infty} \ u(t,x) =Q$ for all $t\in [0,\infty)$.
   
\end{theorem}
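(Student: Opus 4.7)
The plan is to construct a weak solution as the $\varepsilon\to 0$ limit of smooth solutions to a regularized version of \eqref{eq3:halfwave}, following the two-step outline announced in the abstract. For each $\varepsilon>0$ I would replace \eqref{eq3:halfwave} by a Landau-Lifshitz-type regularization such as
\[
\partial_t u_\varepsilon = u_\varepsilon \times \Lah u_\varepsilon - \varepsilon\, u_\varepsilon \times \bigl(u_\varepsilon \times \Lah u_\varepsilon\bigr), \qquad u_\varepsilon(0,\cdot)=u_0.
\]
The extra term is tangent to $S^2$, so the pointwise constraint $|u_\varepsilon|=1$ is preserved, and it acts as a dissipation that provides enough smoothing to set up a standard fixed-point argument for local well-posedness in $\dot{H}^1\cap\dot{H}^{\frac{1}{2}}$; global existence then follows from the uniform energy bound derived in the next step. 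This would yield, for each $\varepsilon$, a smooth global-in-time $S^2$-valued solution $u_\varepsilon$ with $u_\varepsilon(t,\cdot)\to Q$ at spatial infinity for every $t\ge 0$.

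Testing the regularized equation against $\Lah u_\varepsilon$ annihilates the $u_\varepsilon\times\Lah u_\varepsilon$ contribution (since $(u\times v)\perp v$), while the Landau-Lifshitz term produces the nonnegative dissipation $\varepsilon\int\bigl(|\Lah u_\varepsilon|^2-(u_\varepsilon\cdot\Lah u_\varepsilon)^2\bigr)\,dx$ thanks to $|u_\varepsilon|=1$, giving
\[
\tfrac{1}{2}\|u_\varepsilon(t)\|_{\dot{H}^{\frac{1}{2}}}^2 \le \tfrac{1}{2}\|u_0\|_{\dot{H}^{\frac{1}{2}}}^2 \qquad \text{uniformly in }\varepsilon.
\]
Combined with $|u_\varepsilon|=1$ and a bound on $\partial_t u_\varepsilon$ in a negative-regularity space read off from the equation, an Aubin-Lions argument would extract a subsequence $u_\varepsilon\to u$ strongly in $L^2_{t,\mathrm{loc}}([0,\infty),L^2_{\mathrm{loc}}(\R))$ and weakly-$\ast$ in $L^\infty_t\dot{H}^{\frac{1}{2}}$, with $|u|=1$ a.e.\ and the spatial limit $u(t,x)\to Q$ as $|x|\to\infty$ preserved.

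The main obstacle is passing to the limit in the nonlinear right-hand side $\int_0^\infty\!\int_\R \Laq(u_\varepsilon\times\varphi)\cdot \Laq u_\varepsilon\, dx\, dt$ of \eqref{weaksol}, because both factors carry half a derivative and a priori $\Laq u_\varepsilon$ only converges weakly in $L^2_{t,x}$. The key algebraic observation is the commutator decomposition
\[
\Laq(u_\varepsilon\times\varphi) = (\Laq u_\varepsilon)\times\varphi + \bigl[\Laq,\varphi\times\bigr]u_\varepsilon,
\]
combined with the pointwise identity $\bigl((\Laq u_\varepsilon)\times\varphi\bigr)\cdot \Laq u_\varepsilon = 0$ (since $a\times b\perp b$), which eliminates the first piece entirely. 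It therefore suffices to pass to the limit in $\int_0^\infty\!\int_\R \bigl[\Laq,\varphi\times\bigr]u_\varepsilon\cdot \Laq u_\varepsilon\, dx\, dt$. For this I would invoke a Kato-Ponce-type commutator estimate to show that $\bigl[\Laq,\varphi\times\bigr]$, with $\varphi$ smooth and compactly supported, is a locally smoothing operator on $L^2$ (gaining roughly a half derivative), and hence locally compact by Rellich. The local strong $L^2$-convergence of $u_\varepsilon$ then upgrades to strong $L^2$-convergence of $\bigl[\Laq,\varphi\times\bigr]u_\varepsilon$; pairing against the weakly convergent $\Laq u_\varepsilon$ yields the desired passage to the limit. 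Together with the elementary limits in the linear terms, this proves that $u$ satisfies \eqref{weaksol}, completing the argument.
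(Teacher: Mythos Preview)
Your overall strategy and, in particular, the key commutator cancellation $(\Laq u_\varepsilon\times\varphi)\cdot\Laq u_\varepsilon=0$ match the paper exactly; this reduction of the nonlinear term to a commutator paired against the weakly convergent $\Laq u_\varepsilon$ is precisely the structural observation the paper exploits. The differences lie in the choice of regularization and in the compactness machinery.

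The regularization is the one place where your proposal has a genuine gap. Your Landau--Lifshitz damping $-\varepsilon\,u_\varepsilon\times(u_\varepsilon\times\Lah u_\varepsilon)$ keeps the map on $S^2$, which is elegant, but its dissipative part linearizes to only $\varepsilon\Lah$, a first-order operator. Since the nonlinearity $u_\varepsilon\times\Lah u_\varepsilon$ already costs one full derivative, a Duhamel iteration in $\dot{H}^1$ driven by the fractional heat semigroup $e^{-\varepsilon t\Lah}$ produces a nonintegrable $(t-s)^{-1}$ factor; local well-posedness is therefore not the ``standard fixed-point argument'' you assert, and global continuation from the $\dot{H}^{1/2}$ bound alone is likewise not immediate. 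The paper deliberately uses the genuinely parabolic regularization $\partial_t u_\varepsilon-\varepsilon\Delta u_\varepsilon=u_\varepsilon\times\Lah u_\varepsilon$. This sacrifices the sphere constraint---one only has $|u_\varepsilon|\le 1$ from a maximum-principle argument, and $|u_\star|=1$ is verified at the very end by showing $|u_\varepsilon|^2-1=-\varepsilon K_\varepsilon\star|\nabla u_\varepsilon|^2\to 0$---but the full Laplacian makes the local theory a routine contraction in $L^\infty_{t,x}\cap C^0_t\dot{H}^1$ (the kernel $\|\nabla K_\varepsilon\|_{L^1}\sim(t-s)^{-1/2}$ is integrable) and allows a Gronwall-based global extension in $\dot{H}^1$.

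For compactness and the nonlinear limit, the paper does not invoke Aubin--Lions or Kato--Ponce. It splits $u_\varepsilon=P_{<N}u_\varepsilon+P_{\ge N}u_\varepsilon$ in frequency, shows directly that $\partial_t P_{<N}u_\varepsilon$ is bounded in $L^2_{t,x}$ uniformly in $\varepsilon$ (hence strong $L^2_{\mathrm{loc}}$ convergence of the low-frequency piece), controls the high-frequency tail by the uniform $\dot{H}^{1/2}$ bound, and bounds the commutator $\Laq(u_\varepsilon\times\varphi)-(\Laq u_\varepsilon)\times\varphi$ by explicit dyadic estimates that separate the $|\xi|\ge N$ and $|\xi|<N$ contributions. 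Your Aubin--Lions/Kato--Ponce route is a reasonable alternative in spirit, but note that $\Laq$ and the commutator are nonlocal while your strong convergence of $u_\varepsilon$ is only in $L^2_{\mathrm{loc}}$, so some additional localization argument is still needed; the paper's frequency decomposition handles this issue cleanly.
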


We introduce the following parabolic regularization of \eqref{eq3:halfwave}: 
\begin{equation}
    \label{eq3:reg}
    u_t-\varepsilon \triangle u = u \times \Lah u.   
\end{equation}

Using the theory of parabolic equations, we can establish the existence of classical global solutions  $u_\varepsilon \in L_t^\infty L_x^\infty \cap C_t^0([0,\infty), \dot{H}^{\frac{1}{2}}(\R))$ for \eqref{eq3:reg}. Then we can show that the weak limit of these regularized solutions is a weak solution of the half-wave maps equation as $\varepsilon \rightarrow 0$ as defined in \eqref{weaksol}. 

In this work, we only consider initial data, which is smooth and constant outside of a compact domain, since we need the extra regularity to establish the classical well-posedness theory of \eqref{eq3:reg} and the converge of the solution $u$ to the fixed point $Q$. We expect the theorem holds for more general initial data in $ \dot{H}^{\frac{1}{2}}(\R)$. We leave this for future work.

\section{Background}
The half-wave equation is related to the well-studied Schrödinger maps equation in the form of
\[
u_t = u \times \triangle u,    
\]
and the classical wave equation
\[
\Box u=\partial_\alpha \partial^\alpha u=-u \partial_\alpha u^T \partial^\alpha u.
\]

Moreover, we can also view the half-wave map equation as the Landau-Lifshitz equation

\[
u_t=u \times \Lah u +\lambda u \times (u \times \Lah u)
\]

without the Gilbert damping term as $\lambda \rightarrow 0$.

The weak solution of the half-wave map equation with torus domain $\mathbb{T}^n$ was studied in the works of \cite{kato2006quasi},\cite{pu2011fractional} and \cite{pu2013well} in the context of the well-posedness problem of the fractional Landau-Lifshitz equation without Gilbert damping. Pu and Guo in \cite{pu2013well} established the weak solutions of the half-wave map equation with torus domain $\mathbb{T}^n$ via the vanishing viscosity method and Kato's method.

The half-wave map equation \eqref{eq3:halfwave} admits a conserved energy

\begin{equation}
\label{intro:eq:energy}
E(t):=\int_{\R^n} |\Laq u|^2 dx,
\end{equation}
where $\Laq u := -\sum_{j=1}^n (-\triangle)^{-\frac34}\partial_j (\partial_j u)$. 

This gives the a priori condition that $u(t)\in \dot{H}^{\frac{1}{2}}(\R^n)$ which implies that the half-wave maps is energy-critical when $n=1$. In the work of \cite{LenzmannEnno2018Oehm}, Lenzmann and Schikorra give a full classification of the traveling solitary waves for the energy-critical problem with target $S^2$ for $n=1$.

It is also worth noting that the critical points of the \eqref{intro:eq:energy} are the $\frac{1}{2}-$harmonic maps. The fractional harmonic maps were studied in the works of \cite{da2011sub}, \cite{francesca2011three} and \cite{DaLioFrancesca2016HaM}.

The one-dimensional energy-critical half-wave maps are of notable physics interest, intensively studied in the works of \cite{berntson2020multi, enno2018energy, gerard2018lax, lenzmann2018short, zhou2015solitons}. The one-dimensional half-wave maps arise as a continuum limit of the discrete {\em Calogero-Moser (CM) spin system.} Interested readers shall refer to \cite{lenzmann2020derivation} for the derivation of the half-wave maps equation from the CM system. For more on the CM systems, we refer the reader to \cite{blom1999finding, gibbons1984generalisation} in which the authors study the theory of completely integrable systems. In addition, the classical CM spin systems can be obtained by taking a suitable semiclassical limit of the quantum spin chains related to the well-known {\em Haldane-Shastry (HS) spin chains}, see e.g.\cite{haldane1988exact, shastry1988exact}, which are exactly solvable quantum models. \par

The global well-posedness of the Cauchy problem \eqref{eq3:halfwave} with target $S^2$ for small $\besov\times \dot{B}^{\frac{n}{2}-1}_{2,1}$ initial data was established by Krieger and Sire \cite{krieger2017small} for $n\geq 5$. The result was later improved by Kiesenhofer and Krieger \cite{2019arXiv190412709K} to $n=4$. In previous work \cite{liu2023global}, the author established global well-posedness for the half-wave map with $S^2$ target for small $\dot{H}^{\frac{n}{2}} \times \dot{H}^{\frac{n}{2}-1}$ initial data for $n\geq 5$. Global well-posedness for the equation with $\Hy$ target for small smooth $\besov \times \dot{B}^{\frac{n}{2}-1}_{2,1}$ initial data was also proven in \cite{liu2023global}. These works are based on the strategy that transforms the \eqref{eq3:halfwave} to a nonlinear wave equation form. One can then utilize well-established theory for wave maps ( e.g. \cite{sterbenz2005global, T1}) to show global well-posedness for the half-wave maps equation with small initial data.

\section{Regularization of Half-Wave Map Equation}

For $\varepsilon,T>0$, we define the regularized half-wave map equation as
\begin{equation}
    \label{eq:reg}
    \begin{cases}
        &\partial_t u_{\varepsilon} - \varepsilon \triangle u_{\varepsilon} = u_{\varepsilon}\times \Lah u_{\varepsilon}:=N(u_\varepsilon)(x,t) \\
        &u_{\varepsilon}(0,x)= u_0: \R \rightarrow S^2
    \end{cases}
\end{equation}


After the regularization, the equation  \eqref{eq:reg} is a nonlinear parabolic equation. We define its corresponding fundamental solution $K_\varepsilon(x,t)$ for each $\varepsilon$ as
\begin{equation}
    \label{eq:K}
    K_{\varepsilon}(x,t)=\begin{cases}
        \int_\R e^{2\pi i x \cdot \xi -\varepsilon |\xi|^2 t} d\xi=\frac{1}{\sqrt{4 \varepsilon \pi t}} e^{-\frac{|x|^2}{4\varepsilon t}}  \quad &\text{if } t>0 \\
        0 &\text{if } t=0
    \end{cases}
\end{equation}

Moreover, we know that $\hat{K}_{\varepsilon}(\xi,t)= e^{-\varepsilon |\xi|^2 t}$ and $\int_{\R} K_{\varepsilon}(x,t) \, dx =1$.

In particular, $K_{\varepsilon}(x,t)$ is a solution of 
\begin{equation}
    \begin{cases}
        &\partial_t K_{\varepsilon}(x,t)  -\varepsilon \triangle K_{\varepsilon}(x,t)=0 \\
        &K_{\varepsilon}(x,0)=\delta(x)
    \end{cases}
\end{equation}

By the Duhamel principle, we can define a solution $u_{\varepsilon}$ of \eqref{eq:reg} as
\begin{equation}
    \label{solv}
        u_\varepsilon (t,x) =\int_0^t \int_\R K_{\varepsilon}(x-y,t-s) \ N(u_\varepsilon)(y,s) \, dy \, ds +\int_\R K_{\varepsilon}(x-y,t) u_0(y) \, dy  
\end{equation}



We first use an iteration scheme to find a local solution $u_\varepsilon$ of \eqref{eq:reg} in $  C_0^t \dot{H}^1 \cap L_t^\infty L_x^\infty ([0, T]\times \R,\R^3 )$.

\begin{theorem}{}
    \label{thm:local_existence}
    Let $T, \varepsilon>0$, and $u_0 \in \dot{H}^1 \cap \dot{H}^{\frac{1}{2}}(\R)$ smooth and constant outside of a compact set of  $\R$.
    Then there exists a maximal time $T(u_0)>0$ such that \eqref{eq:reg} admit an unique solution  $u_\varepsilon$ in $C_0^t \dot{H}_x^1 \cap L_t^\infty L_x^\infty ([0,T]\times \R,\R^3 )$ such that $\lim_{|x|\rightarrow \infty} \ u_\varepsilon(t,x) =Q$ for all $t\in [0,T)$.
\end{theorem}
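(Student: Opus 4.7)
The plan is a standard contraction-mapping argument applied to the Duhamel integral equation \eqref{solv}. To handle the constant-at-infinity boundary condition cleanly, I would first substitute $v_\varepsilon := u_\varepsilon - Q$; since $Q$ is constant, $\Lah Q = 0$, so $v_\varepsilon$ solves $\partial_t v_\varepsilon - \varepsilon \triangle v_\varepsilon = (v_\varepsilon + Q) \times \Lah v_\varepsilon$ with initial datum $v_0 := u_0 - Q$, which is smooth and compactly supported and hence in $H^s(\R, \R^3)$ for every $s \ge 0$.

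I would set up the iteration in the Banach space $X_T := C^0([0,T]; H^1(\R, \R^3))$, which in spatial dimension one is continuously embedded in $L^\infty_{t,x}$. In the paper's convention, $\|\Lah f\|_{L^2} = \|f\|_{\dot{H}^1}$, so $H^1$ is precisely the natural space in which $\Lah$ is bounded into $L^2$. The heat-kernel estimates I need, all standard consequences of \eqref{eq:K} and Young's inequality, are
\begin{equation*}
    \|K_\varepsilon(t) * f\|_{L^p(\R)} \le \|f\|_{L^p(\R)}, \qquad \|\partial_x K_\varepsilon(t) * f\|_{L^2(\R)} \lesssim (\varepsilon t)^{-1/2} \|f\|_{L^2(\R)},
\end{equation*}
for every $p \in [1, \infty]$ and $t > 0$.

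The crucial nonlinear estimate is then
\begin{equation*}
    \|(v+Q) \times \Lah v\|_{L^2} \le (\|v\|_{L^\infty} + 1)\|\Lah v\|_{L^2} \lesssim (\|v\|_{H^1} + 1)\|v\|_{\dot{H}^1},
\end{equation*}
together with the analogous Lipschitz estimate obtained from $N(v) - N(w) = (v-w) \times \Lah v + (w + Q) \times \Lah(v - w)$. Feeding this into the Duhamel map $\Phi(v)(t) := K_\varepsilon(t) * v_0 + \int_0^t K_\varepsilon(t-s) * [(v+Q) \times \Lah v](s)\, ds$ and using the heat-kernel estimates above, one obtains, for $v$ in a closed ball $B_M \subset X_T$ of radius $M \gtrsim \|v_0\|_{H^1}$, an estimate of the form
\begin{equation*}
    \|\Phi(v)\|_{X_T} \le \|v_0\|_{H^1} + C\bigl(T + \varepsilon^{-1/2} T^{1/2}\bigr) M(M+1),
\end{equation*}
with a matching Lipschitz constant that becomes arbitrarily small once $T = T(\varepsilon, M, \|v_0\|_{H^1})$ is chosen small enough. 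Banach's fixed point theorem then produces a unique $v_\varepsilon \in B_M$, hence the desired local solution $u_\varepsilon = v_\varepsilon + Q$ to \eqref{eq:reg}.

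The boundary condition at infinity is now automatic, because $v_\varepsilon(t, \cdot) \in H^1(\R) \hookrightarrow C_0(\R)$, the space of continuous functions vanishing at infinity; hence $u_\varepsilon(t, x) \to Q$ as $|x| \to \infty$ for every $t$ in the existence interval. A standard iteration of the local argument then yields a maximal existence time $T(u_0) > 0$. The main obstacle is the nonlinear estimate: the cross product is pointwise zeroth-order but $\Lah$ is a first-order nonlocal operator, so the iteration must be set up in a norm controlling both $\|v\|_{L^\infty}$ and $\|v\|_{\dot{H}^1}$ simultaneously, and it is only the integrable-in-$s$ parabolic smoothing factor $(\varepsilon(t-s))^{-1/2}$ that recovers the lost derivative and allows the iteration to close.
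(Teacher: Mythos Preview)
Your argument is correct and genuinely cleaner than the paper's. The paper runs a Picard iteration $u_\varepsilon^{(j)}$ directly for $u_\varepsilon$ in the space $X_T = L_t^\infty L_x^\infty \cap C_t^0 \dot{H}_x^1$, estimating the $L^\infty$ and $\dot{H}^1$ norms of each iterate separately (with factors $T^{3/4}$ and $T^{1/2}$ respectively) and then showing the sequence is Cauchy. The boundary condition $u_\varepsilon(t,x)\to Q$ is then established in a separate, rather laborious lemma by tracking the decay $|x^n(u_\varepsilon^{(j)}-Q)|\le C$ through the iteration, using the compact support of $\partial_x u_0$ and the Hilbert-transform representation of $\Lah$.

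Your substitution $v_\varepsilon=u_\varepsilon-Q$ and choice of the inhomogeneous space $C^0([0,T];H^1)$ collapse both parts into one: since $v_0$ is compactly supported you land in $H^1$, the Sobolev embedding $H^1(\R)\hookrightarrow L^\infty$ removes the need for a separate $L^\infty$ estimate, and the embedding $H^1(\R)\hookrightarrow C_0(\R)$ gives the limit at infinity for free, bypassing the paper's decay lemma entirely. What the paper's longer route buys is more quantitative spatial decay information (boundedness of $x^n\partial_x u_\varepsilon^{(j)}$ and $x^n\Lah u_\varepsilon^{(j)}$), which is not used later but could be of independent interest; it also works directly in the homogeneous space $\dot H^1\cap L^\infty$ that matches the global energy estimates. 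One small caveat: your fixed point yields uniqueness only within the ball $B_M\subset C^0([0,T];H^1)$, so to match the stated uniqueness in $C_t^0\dot H^1\cap L_t^\infty L_x^\infty$ you should add the usual one-line Gronwall argument for the difference of two solutions; the paper is equally informal on this point.
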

\begin{proof}
    We use an iteration scheme to show there exists a solution for equation \eqref{eq:reg} in the solution space $X_T=L_t^\infty L_x^\infty \cap C_t^0 \dot{H}_x^1([0,T]\times \R)$.

We start with $u_{\varepsilon}^{(0)}$ which solves the homogeneous equation:
\begin{equation}
    \label{eq:iter0}
    \begin{cases}
        &\partial_t u_{\varepsilon}^{(0)} -\varepsilon \triangle u_{\varepsilon}^{(0)} =0\\
        &u_{\varepsilon}^{(0)}(0,\cdot)=u_0
    \end{cases}
\end{equation}

By the fundamental solution, we know that $u_\varepsilon^{(0)}(t,x)= K_\varepsilon \star u_0(t,x)= \int_{\R} K_\varepsilon(t,x-y) u_0(y) \, dy$. Hence, we have
\begin{equation*}
    \begin{split}
         \|u_\varepsilon^{(0)}\|_{L_t^\infty L_x^\infty}
        \leq \|u_0\|_{L_t^\infty L_x^\infty} | \int_{\R} K_\varepsilon(t,x-y) \, dy| \leq  \|u_0\|_{L_t^\infty L_x^\infty} 
    \end{split}
\end{equation*}

Hence we know that 
\begin{equation}
    \|u_\varepsilon^{(0)}\|_{L_t^\infty L_x^\infty} \leq \|u_0\|_{L_t^\infty L_x^\infty} 
\end{equation}

Next,we consider the $C_0^t \dot{H}^1$ norm of $u_\varepsilon^{(0)}$.
\begin{equation}
    \begin{split}
        \|u_\varepsilon^{(0)}(t,\cdot)\|_{\dot{H}^1} &= \| \int_{\R} K_\varepsilon(t,y) \nabla_x u_0(x-y) \, dy \|_{L_x^2} \\
        &\leq  \|K_\varepsilon(t-s,\cdot)\|_{L_x^1} \| \nabla_x u_0(s,\cdot) \|_{L_x^2} \, \\
        &\leq  \| u_0\|_{L_t^\infty \dot{H}^1}  
    \end{split}
\end{equation}

Hence, we have
\begin{equation}
    \|u_\varepsilon^{(0)}\|_{L_t^\infty \dot{H}^1} \leq \| u_0\|_{L_t^\infty \dot{H}^1} 
\end{equation}

Therefore, we conclude that $u_{\varepsilon}^{(0)} \in X_T$. Moreover, $u_\varepsilon^{(0)}$ is the unique solution for \eqref{eq:iter0} by the uniqueness of the homogeneous heat equation theory.
Next, we define $u_\varepsilon^{(1)}$ as a solution of 
\begin{equation}
    \label{eq:iter1}
    \begin{cases}
        &\partial_t u_\varepsilon^{(1)} -\varepsilon \triangle u_\varepsilon^{(1)} =u_{\varepsilon}^{(0)} \times \Lah u_{\varepsilon}^{(0)}:=N(u_{\varepsilon}^{(0)})\\
        &u_\varepsilon^{(1)}(0,\cdot)=u_0
    \end{cases}
\end{equation}

By the Duhamel's principle, we have
\[
    u_\varepsilon^{(1)}= K_\varepsilon \star N(u_\varepsilon^{(0)})+ K_\varepsilon \star u_0.
\]  

First of all, we know that 
\begin{equation*}
    \begin{split}
        &\|K_\varepsilon \star N(u_\varepsilon^{(0)})(t,\cdot)\|_{L_x^\infty}\\
         &\leq \int_0^t \frac{1}{\sqrt{4\varepsilon\pi (t-s)}} \ \int_{\R} e^{-\frac{|x-y|^2}{4\varepsilon(t-s)}} |u_\varepsilon^{(0)}\times \Lah u_\varepsilon^{(0)}|(s,y) \, dy \, ds \\ 
        &\leq \|u_\varepsilon^{(0)}\|_{L_t^\infty L_x^\infty} \int_0^t \frac{1}{\sqrt{4\pi \varepsilon (t-s)}} \ \| e^{-\frac{|x-\cdot|^2}{4\varepsilon(t-s)}} \|_{L^2_y} \ \|\Lah u_\varepsilon^{(0)}(s,\cdot)\|_{L_y^2} \, ds \\
        &\lesssim \|u_\varepsilon^{(0)}\|_{L_t^\infty \dot{H}^1} \int_0^t \frac{1}{(t-s)^{\frac{1}{4}}} \, ds\\
        &\lesssim t^{\frac{3}{4}} \ \|u_\varepsilon^{(0)}\|_{L_t^\infty \dot{H}^1} 
    \end{split}
\end{equation*}

Therefore, we have
\begin{equation}
    \max_{t\in [0,T]} \|u_\varepsilon^{(1)}\|_{L_x^\infty} \leq C T^{\frac{3}{4}} \|u_\varepsilon^{(0)}\|_{L_t^\infty \dot{H}^1} 
\end{equation}

For the $C_0^t \dot{H}^1$ norm, we use Minkowski's inequality for integrals to derive:
\begin{equation*}
    \begin{split}
      &\| K_\varepsilon \star N(u_\varepsilon^{(0)})(t,\cdot)\|_{\dot{H}_x^1} \\
       &=\| \nabla_x K_\varepsilon \star N(u_\varepsilon^{(0)})(t,\cdot)\|_{L_x^2}\\
       &\leq  \int_0^t \|\nabla_x K_\varepsilon \star N(u_\varepsilon^{(0)})(t,s) \|_{L_x^2}\, ds.
    \end{split}
\end{equation*}

Through Young's inequality, we have
\begin{equation*}
    \begin{split}
      &\|\nabla_x K_\varepsilon \star N(u_\varepsilon^{(0)})(t,s) \|_{L_x^2}\\
      &\leq \|\nabla_x K_\varepsilon(t-s)\|_{L_x^1} \|N(u_\varepsilon^{(0)})(s)\|_{L_x^2}
    \end{split}
\end{equation*}

We know that
\begin{equation*}
    \begin{split}
        &\|N(u_\varepsilon^{(0)})(s)\|_{L_x^2} \\
        =& \int_{\R} |u_\varepsilon^{(0)}\times \Lah u_\varepsilon^{(0)}|^2 \, dx\\
        \leq& \int_{\R} |u_\varepsilon^{(0)}|^2 |\Lah u_\varepsilon^{(0)}|^2 \, dx + \int_{\R} |u_\varepsilon^{(0)} \cdot \Lah u_\varepsilon^{(0)}|^2 \, dx\\
        \lesssim& \|u_\varepsilon^{(0)}\|_{L_t^\infty \dot{H}_x^1}^2,
    \end{split}
\end{equation*}

and 
\begin{equation*}
    \begin{split}
        &\|\nabla_x K_\varepsilon(t-s)\|_{L^1_x}\\
        &=\frac{1}{\sqrt{\pi \varepsilon (t-s)}} \frac{1}{2\varepsilon(t-s)} \ \int_{\R} |x-y| e^{-\frac{|x-y|^2}{4\varepsilon(t-s)}}  \, dx \\
        &\leq \frac{2}{\sqrt{\pi (t-s)}}.
    \end{split}
\end{equation*}

Thus we have 
\begin{equation}
    \label{eq:iter1_1}
    \begin{split}
        \| K_\varepsilon \star N(u_\varepsilon^{(0)})\|_{L_t^\infty \dot{H}_x^1} &\leq C \sup_{t\in [0,T]} \int_0^t \frac{1}{\sqrt{\pi (t-s)}} \, ds \leq C \sqrt{T}
    \end{split}
\end{equation}

Therefore, we know that solution $u_\varepsilon^{(1)} \in X_T$ as well.

We then show that solution $u_\varepsilon^{(1)}$ is the unique solution for \eqref{eq:iter1} in $X_T$ space. Assume we have another solution $v_\varepsilon^{(1)} \in X_T$ for \eqref{eq:iter1}. We consider $w_\varepsilon=u_\varepsilon^{(1)}-v_\varepsilon^{(1)}$ and we have
\begin{equation}
    \begin{cases}
        &\partial_t w_\varepsilon -\varepsilon \triangle w_\varepsilon =0\\
        &w_\varepsilon(\cdot,0)=0
    \end{cases}
\end{equation}

For the energy functional $E(t)=\int_{\R} |w_\varepsilon|^2 dx$. We have

\begin{equation}
    \frac{d}{dt} \int_{\R} |w_\varepsilon|^2 dx =-\varepsilon \int_{\R} |\nabla_x w_\varepsilon|^2 dx \leq 0 
\end{equation}

Hence 
\begin{equation}
    \frac{d}{dt} E(t)\leq 0
\end{equation}
    
 Therefore, we know that $E(t)\equiv 0$ for all $t\in [0,T]$. Thus $w_\varepsilon(t,\cdot)=0$ for all $t\in [0,T]$. So we have $u_\varepsilon^{(1)}=v_\varepsilon^{(1)}$ on $[0,T]\times \R$. Therefore, $u_\varepsilon^{(1)}$ is the unique solution for \eqref{eq:iter1} in $X_T$.

For $j \geq 2$, we define the general iterative scheme inductively as follows: 
\begin{equation*}
    \begin{cases}
        &\partial_t u_\varepsilon^{(j)} -\varepsilon \triangle u_\varepsilon^{(j)} =u_\varepsilon^{(j-1)} \times \Lah u_\varepsilon^{(j-1)}:=N(u_\varepsilon^{(j-1)})\\
        &u_\varepsilon^{(j)}(0,\cdot)=u_0
    \end{cases}
\end{equation*}

We have  
\[
    u_\varepsilon^{(j)}= K_\varepsilon \star N(u_\varepsilon^{(j-1)})+ K \star u_0 .
\]

Following the same procedure, we have a unique solution $u_\varepsilon^{(j)} \in X_T$. Therefore, we conclude that 
\begin{equation}
    u_\varepsilon^{(j)} \in X_T \quad \forall j \geq 1.
\end{equation}

Next, we want to show that $\{u_\varepsilon^{(j)}\}$ is a Cauchy sequence in $X_T$. 

For any $k,l$ large enough, we have
\begin{equation}
    \begin{split}
         \|u_\varepsilon^{j}-u_\varepsilon^{k}\|_{L_t^\infty \dot{H}^1}=& \|K_\varepsilon \star ( N(u_\varepsilon^{(j-1)}) -N(u_\varepsilon^{(k-1)}))\|_{L_t^\infty \dot{H}_x^1}\\
        \leq& \|K \star ( (u_\varepsilon^{(j-1)} -u_\varepsilon^{(k-1)})\times u_\varepsilon^{(j-1)})\|_{L_t^\infty \dot{H}_x^1}\\
        &+\|K_\varepsilon \star ( u_\varepsilon^{(k-1)} \times \Lah(u_\varepsilon^{(j-1)} - u_\varepsilon^{(k-1)}))\|_{L_t^\infty \dot{H}_x^1}
    \end{split}
\end{equation}

We estimate the two terms in the above equation separately. For the first term, we have
\begin{equation}
    \begin{split}
        &\|K_\varepsilon \star ( (u_\varepsilon^{(j-1)} -u_\varepsilon^{(k-1)})\times \Lah u_\varepsilon^{(j-1)})\|_{L_t^\infty \dot{H}^1} \\
        &\leq \int_0^T \| \nabla_x K_\varepsilon(t-s)\|_{L_x^1} \|(u_\varepsilon^{(j-1)} -u_\varepsilon^{(k-1)})\times \Lah u_\varepsilon^{(j-1)}(\cdot,s)\|_{L_x^2} ds\\
        &\leq C \sqrt{T} \| u_\varepsilon^{(j-1)}\|_{L_t^\infty \dot{H}^1} \| u_\varepsilon^{(j-1)}- u_\varepsilon^{(k-1)}\|_{L_t^\infty L_x^\infty}
    \end{split}
\end{equation}

Similarly, for the second term, we have 
\begin{equation}
    \begin{split}
        &\|K_\varepsilon \star ( u_\varepsilon^{(k-1)} \times \Lah(u_\varepsilon^{(j-1)} - u_\varepsilon^{(k-1)}))\|_{L_t^\infty \dot{H}_x^1} \\
        &\leq \int_0^T \| \nabla_x K_\varepsilon(t-s)\|_{L_x^1} \|u_\varepsilon^{(k-1)} \times \Lah(u_\varepsilon^{(j-1)} - u_\varepsilon^{(k-1)})\|_{L_x^2} \\
        &\leq C \sqrt{T} \|u_\varepsilon^{(k-1)}\|_{L_t^\infty L_x^\infty} \| u_\varepsilon^{(j-1)}- u_\varepsilon^{(k-1)}\|_{L_t^\infty \dot{H}^1} 
    \end{split}
\end{equation}

Hence, we know that 
\begin{equation}
    \|u_\varepsilon^{(j)}-u_\varepsilon^{(k)}\|_{L_t^\infty \dot{H}^1} \leq C \sqrt{T} \| u_\varepsilon^{(j-1)}- u_\varepsilon^{(k-1)}\|_{L_t^\infty \dot{H}^1},
\end{equation}

where $C$ is a constant independent of $\|u_\varepsilon^{(k-1)}\|_{L_t^\infty L_x^\infty}, \| u_\varepsilon^{(j-1)}\|_{L_t^\infty \dot{H}^1}$. Moreover,
\begin{equation}
    \begin{split}
        &\|u_\varepsilon^{(j)}-u_\varepsilon^{(k)}(t,\cdot)\|_{L_x^\infty} \\
        \leq& \int_0^t \frac{1}{\sqrt{4\pi (t-s)}} \ \int_{\R} e^{-\frac{|x-y|^2}{4\varepsilon(t-s)}} |N(u_\varepsilon^{j})-N(u_\varepsilon^{(k)}|(y,s) \, dy \, ds \\ 
        \leq& \int_0^t \frac{1}{\sqrt{4\pi (t-s)}} \ \int_{\R} e^{-\frac{|x-y|^2}{4\varepsilon(t-s)}} |(u_\varepsilon^{(j-1)} -u_\varepsilon^{(k-1)})\times \Lah u_\varepsilon^{(j-1)}|(y,s) \, dy \, ds \\
        &+\int_0^t \frac{1}{\sqrt{4\pi (t-s)}} \ \int_{\R} e^{-\frac{|x-y|^2}{4\varepsilon(t-s)}} |u_\varepsilon^{(k-1)} \times \Lah(u_\varepsilon^{(j-1)} - u_\varepsilon^{(k-1)})|(y,s) \, dy \, ds \\
        \leq& \|u_\varepsilon^{(j-1)}-u_\varepsilon^{(k-1)}\|_{L_t^\infty L_x^\infty} \int_0^t \frac{1}{\sqrt{4\pi (t-s)}} \ \| e^{-\frac{|x-\cdot|^2}{4\varepsilon(t-s)}} \|_{L^2_y} \ \|\Lah v^{(j-1)}(s,\cdot)\|_{L_y^2} \, ds \\
        &+ \|u_\varepsilon^{(k-1)}\|_{L_t^\infty L_x^\infty} \int_0^t \frac{1}{\sqrt{4\pi (t-s)}} \ \| e^{-\frac{|x-\cdot|^2}{4\varepsilon(t-s)}} \|_{L^2_y} \ \|\Lah (u_\varepsilon^{(j-1)}-u_\varepsilon^{(k-1)})(s,\cdot)\|_{L_y^2} \, ds \\
        \leq& C \|u_\varepsilon^{(j-1)}-u_\varepsilon^{(k-1)}\|_{L_t^\infty \dot{H}^1} \int_0^t \frac{1}{(t-s)^{\frac{1}{4}}} \, ds\\
        \leq& C t^{\frac{3}{4}} \ \|u_\varepsilon^{(j-1)}-u_\varepsilon^{(k-1)}\|_{L_t^\infty L_x^\infty}
        \end{split}
\end{equation}

Hence, 
\begin{equation}
        \|u_\varepsilon^{(j)}-u_\varepsilon^{(k)}\|_{L_t^\infty L_x^\infty} \leq C T^{\frac{3}{4}} \| u_\varepsilon^{(j-1)}- u_\varepsilon^{(k-1)}\|_{L_t^\infty L_x^\infty}
\end{equation}

Therefore, we have 
\begin{equation}
    \|u_\varepsilon^{(j)}-u_\varepsilon^{(k)}\|_{L_t^\infty L_x^\infty \cap C_0^t \dot{H}^1} \leq C T^{\alpha} \| u_\varepsilon^{(j-1)}- u_\varepsilon^{(k-1)}\|_{L_t^\infty L_x^\infty \cap C_0^t \dot{H}^1},
\end{equation}
for some $\alpha \in (0,1)$ and $C$ depends on $\|u_\varepsilon^{(j-1)}\|_{X_T}, \|u_\varepsilon^{(k-1)}\|_{X_T}.$

We choose $T>0$ small enough so that $C T^{\alpha} < 1$. Then we have a contraction mapping from $L_t^\infty L_x^\infty \cap C_0^t \dot{H}^1$ to itself. Thus, we conclude that there exists a $u_{\varepsilon} \in L_t^\infty L_x^\infty \cap C_0^t \dot{H}^1$ s.t. $u_{\varepsilon}^{(j)} \rightarrow u_{\varepsilon}$ in $L_t^\infty L_x^\infty \cap C_0^t \dot{H}^1$ as $j \rightarrow \infty$. 

Moreover, $u_\varepsilon$ is the solution for \eqref{eq:reg} in $X_T$ and it has the form:
\[
 u_\varepsilon = K_\varepsilon \star N(u_\varepsilon)+ K_\varepsilon \star u_0    
\]

Furthermore, we have 
\begin{equation}
    \label{eq:localu}
    \|u_\varepsilon\|_{L_t^\infty L_x^\infty \cap C_0^t \dot{H}^1} \leq C T^{\alpha} \|u_0\|_{L_t^\infty L_x^\infty \cap C_0^t \dot{H}^1}
\end{equation}

Hence, we showed there exists a unique solution $u_\varepsilon$ for \eqref{eq:reg} in $L_t^\infty L_x^\infty \cap C_0^t \dot{H}^1([0,T]\times \R)$.
\end{proof}
\begin{Lemma}
    For the solution $u_\varepsilon$ we obtained, we have 
    \begin{equation}
        \lim_{|x|\rightarrow \infty} u_\varepsilon(t,x)=Q, \ \forall t\in [0,T).
    \end{equation}
\end{Lemma}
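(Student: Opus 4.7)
The plan is to use the Duhamel representation \eqref{solv} for $u_\varepsilon$ and exploit the fact that $K_\varepsilon$ is a probability kernel. Writing $u_0 = Q + (u_0-Q)$ and using $\int_\R K_\varepsilon(\cdot,t)\,dy = 1$, I would split
\[
u_\varepsilon(t,x) - Q \;=\; \big(K_\varepsilon(t,\cdot)\star(u_0 - Q)\big)(x) \;+\; \int_0^t \big(K_\varepsilon(t-s,\cdot)\star N(u_\varepsilon)(s,\cdot)\big)(x)\,ds,
\]
and show that each of the two terms tends to $0$ as $|x|\to\infty$, for every fixed $t\in[0,T)$.

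For the linear term, the hypothesis that $u_0$ is constant equal to $Q$ outside a compact set $B_R$ means $u_0-Q$ is bounded with compact support in $B_R$. Thus
\[
\big|\big(K_\varepsilon(t,\cdot)\star(u_0-Q)\big)(x)\big| \;\le\; \|u_0-Q\|_{L^\infty}\int_{B_R} K_\varepsilon(x-y,t)\,dy,
\]
and since the Gaussian $K_\varepsilon(x-y,t)\to 0$ uniformly for $y\in B_R$ as $|x|\to\infty$, dominated convergence gives vanishing at infinity for each $t>0$.

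For the nonlinear term, the previous theorem furnishes $u_\varepsilon \in L_t^\infty L_x^\infty \cap C_t^0\dot H^1_x$ on $[0,T]$, so
\[
\|N(u_\varepsilon)(s,\cdot)\|_{L_x^2} \;\le\; \|u_\varepsilon\|_{L_t^\infty L_x^\infty}\,\|\Lah u_\varepsilon(s,\cdot)\|_{L_x^2} \;\le\; C
\]
uniformly in $s\in[0,T]$. The key auxiliary fact I would use is the elementary lemma: \emph{if $f\in L^2(\R)$ and $g\in L^2(\R)$ with $g(z)\to 0$ as $|z|\to\infty$, then $(g\star f)(x)\to 0$ as $|x|\to\infty$.} This is proved by approximating $f$ in $L^2$ by a compactly supported $L^2$-function $f_R$, controlling the error via Cauchy--Schwarz, and using $\int_{|z|\ge M}|g(z)|^2\,dz\to 0$ for the truncated piece. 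Applied with $g = K_\varepsilon(\cdot,t-s)$ and $f = N(u_\varepsilon)(s,\cdot)$, this yields pointwise vanishing of the inner convolution as $|x|\to\infty$ for each fixed $s<t$.

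To pass this pointwise limit through the $s$-integral, I would use Cauchy--Schwarz together with the standard bound $\|K_\varepsilon(\cdot,t-s)\|_{L_x^2}\lesssim (t-s)^{-1/4}$ to produce the $x$-independent, $s$-integrable majorant $C(t-s)^{-1/4}$ on $[0,t]$, and then apply dominated convergence. The main subtle point I anticipate is justifying the uniform-in-$x$ majorant near the singularity $s\to t^-$; this is handled precisely because the $(t-s)^{-1/4}$ loss from $\|K_\varepsilon(\cdot,t-s)\|_{L_x^2}$ is integrable in one spatial dimension, while the pointwise decay in $|x|$ of each slice is supplied by the auxiliary lemma above.
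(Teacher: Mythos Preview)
Your argument is correct and takes a genuinely different, more economical route than the paper's. The paper works iteratively along the Picard sequence $u_\varepsilon^{(j)}$: for each iterate it proves weighted $L^\infty$ bounds of the form $|x\,(u_\varepsilon^{(j)}-Q)|\le C$, and to do so it has to propagate the auxiliary estimates $|x^n\,\partial_x u_\varepsilon^{(j)}|\le C$ and $|x^n\,\Lah u_\varepsilon^{(j)}|\le C$ through the Hilbert-transform representation of $\Lah$; it then asserts that the limit $u_\varepsilon$ inherits the decay. By contrast, you work directly with the Duhamel formula for the fixed point $u_\varepsilon$, using only the bound $\|N(u_\varepsilon)(s,\cdot)\|_{L^2}\le\|u_\varepsilon\|_{L^\infty}\|u_\varepsilon\|_{\dot H^1}$ already furnished by the local existence theorem, together with the elementary fact $L^2\star L^2\subset C_0(\R)$ and dominated convergence via the integrable majorant $C(t-s)^{-1/4}$.

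What each approach buys: your argument is shorter and sidesteps the issue of whether the constants in the paper's weighted bounds are uniform in the iteration index $j$ (a point the paper does not address explicitly when passing to the limit). The paper's approach, on the other hand, yields a quantitative decay rate $|u_\varepsilon^{(j)}(t,x)-Q|=O(|x|^{-1})$ and the additional information that $\partial_x u_\varepsilon^{(j)}$ and $\Lah u_\varepsilon^{(j)}$ decay polynomially fast, though these refinements are not used later in the paper. One minor remark on your auxiliary lemma: the hypothesis $g(z)\to 0$ is superfluous, since $g\in L^2$ alone already gives $\int_{|z|\ge M}|g|^2\to 0$, which is all your density argument needs.
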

\begin{proof}
We show the convergence iteratively for each $u^{(j)}_\varepsilon$. Hence the limit $u_\varepsilon$ also converges to $Q$ as $|x|\rightarrow \infty$.

For $t\in [0,T)$ and $x$ large, we first have 
\begin{align}
|x \cdot (u_\varepsilon^{(0)} -Q)| 
&\leq \Big|\int_{\R} \frac{1}{\sqrt{4 \varepsilon \pi t}} e^{-\frac{|x-y|^2}{4\varepsilon t}} x (u_0(y)-Q) \, dy  \Big| \nonumber \\
&\leq \Big|\int_{\R} \frac{1}{\sqrt{4 \varepsilon \pi t}} e^{-\frac{|x-y|^2}{4\varepsilon t}} (x-y)(u_0(y)-Q) \, dy \Big| \label{covu01} \\
&+\Big|\int_{\R} \frac{1}{\sqrt{4 \varepsilon \pi t}} e^{-\frac{|x-y|^2}{4\varepsilon t}} y (u_0(y)-Q) \, dy \Big| \label{covu02}
\end{align}

For \eqref{covu01}, we have
\begin{equation*}
    \begin{split}
        &\int_{\R} \frac{1}{\sqrt{4 \varepsilon \pi t}} e^{-\frac{|x-y|^2}{4\varepsilon t}} |x-y| |u_0(y)-Q| \, dy \\
    &\leq C  \|e^{-\frac{|x-y|^2}{4\varepsilon t}} |x-y|\|_{L_y^1} \|u_0-Q\|_{L_y^\infty} \\
    &\leq C
    \end{split}
\end{equation*}

For \eqref{covu02}, we have 
\begin{equation*}
    \begin{split}
        &\int_{\R} \frac{1}{\sqrt{4 \varepsilon \pi t}} e^{-\frac{|x-y|^2}{4\varepsilon t}} |y| |u_0(y)-Q| \, dy \\
        &\leq  C \| y (u_0(y)-Q)\|_{L_y^\infty}.
    \end{split} 
\end{equation*}

Since $u_0$ is constant outside of a compact domain, we know that $y (u_0(y)-Q)$ is bounded. Hence we have
\begin{equation}
    \|x \cdot (u_\varepsilon^{(0)} -Q)\|_{L_x^\infty}  \leq C.
\end{equation}

So we know that 
\[
    \lim_{|x|\rightarrow \infty} u_\varepsilon^{(0)}=Q, \ \text{for all} \ t\in [0,T).
\]

Next, we consider $u_\varepsilon^{(1)}$. We have
\begin{equation*}
    \begin{split}
        \lim_{|x|\rightarrow \infty} |u_\varepsilon^{(1)}(t,x)-Q| &\leq  \lim_{|x|\rightarrow \infty} |K_\varepsilon \star N(u_\varepsilon^{(0)})| + \lim_{|x|\rightarrow \infty}|K_\varepsilon \star u_0 -Q|.
    \end{split}
\end{equation*}

We already know that $\lim_{|x|\rightarrow \infty}|K_\varepsilon \star u_0 -Q|=0$. We then show that
\[
    \lim_{|x|\rightarrow \infty} K_\varepsilon \star N(u_\varepsilon^{(0)})=0.
\]

We first show that
\begin{equation}
    \label{cov:nu0}
    |x\cdot K_\varepsilon \star N(u_\varepsilon^{(0)})| \leq C, \ \text{for large} \ x.
\end{equation} 

We have 
\begin{align}
        |x\cdot K_\varepsilon \star N(u_\varepsilon^{(0)})|&= \Big|\int_0^t \frac{1}{\sqrt{4\varepsilon\pi (t-s)}} \ \int_{\R} e^{-\frac{|x-y|^2}{4\varepsilon(t-s)}} x (u_\varepsilon^{(0)}\times \Lah u_\varepsilon^{(0)}) (s,y) \, dy \, ds \Big| \nonumber \\
        &\leq \Big| \int_0^t \frac{1}{\sqrt{4\varepsilon\pi (t-s)}} \ \int_{\R} e^{-\frac{|x-y|^2}{4\varepsilon(t-s)}} (x-y)  (u_\varepsilon^{(0)}\times \Lah u_\varepsilon^{(0)}) (s,y)  \, dy \, ds \Big| \label{covu11}\\
        &+ \Big|\int_0^t \frac{1}{\sqrt{4\varepsilon\pi (t-s)}} \ \int_{\R} e^{-\frac{|x-y|^2}{4\varepsilon(t-s)}} y  (u_\varepsilon^{(0)}\times \Lah u_\varepsilon^{(0)}) (s,y) \, dy \, ds\Big| \label{covu12}
\end{align}

For \eqref{covu11}, we have 
\begin{equation*}
    \begin{split}
        &\Big| \int_0^t \frac{1}{\sqrt{4\varepsilon\pi (t-s)}} \ \int_{\R} e^{-\frac{|x-y|^2}{4\varepsilon(t-s)}} (x-y)  (u_\varepsilon^{(0)}\times \Lah u_\varepsilon^{(0)}) (s,y)  \, dy \, ds\Big| \\
    &\leq \int_0^t \frac{1}{\sqrt{4\varepsilon\pi (t-s)}} \|e^{-\frac{|x-y|^2}{4\varepsilon(t-s)}} (x-y)\|_{L_y^2} \|u_\varepsilon^{(0)}\|_{L_y^\infty}  \| \Lah u_\varepsilon^{(0)}\|_{L_y^2} \, ds \\
    &\leq C \sqrt{t} \|u_\varepsilon^{(0)}\|_{\dot{H^1}} \\
    &\leq C.
    \end{split} 
\end{equation*}

Then we consider \eqref{covu12}. We have
\begin{equation*}
    \begin{split}
        &\Big|\int_0^t \frac{1}{\sqrt{4\varepsilon\pi (t-s)}} \ \int_{\R} e^{-\frac{|x-y|^2}{4\varepsilon(t-s)}} y  (u_\varepsilon^{(0)}\times \Lah u_\varepsilon^{(0)}) (s,y) \, dy \, ds\Big| \\
        &\leq \int_0^t \frac{1}{\sqrt{4\varepsilon\pi (t-s)}} \|e^{-\frac{|x-y|^2}{4\varepsilon(t-s)}}\|_{L_y^1} \|u_\varepsilon^{(0)}\|_{L_y^\infty}  \|y \Lah u_\varepsilon^{(0)}\|_{L_y^\infty} \, ds \\
        &\leq C \sqrt{t} \|y \Lah u_\varepsilon^{(0)}\|_{L_y^\infty} 
    \end{split}
\end{equation*}

So we reduce the problem to show that
\begin{equation}
    \label{covyu0}
    \|y \Lah u_\varepsilon^{(0)}(y)\|_{L_y^\infty}  \leq C.
\end{equation}
We use integration by parts to derive
    \begin{equation*}
        \begin{split}
            |x \partial_x u_\varepsilon^{(0)}(t,x)| &= |\frac{1}{\sqrt{4 \varepsilon \pi t}} \int_{\R} x \partial_x e^{-\frac{|x-y|^2}{4\varepsilon t}} u_\varepsilon^{(0)}(y) \, dy |\\
            &= |\frac{1}{\sqrt{4 \varepsilon \pi t}} \int_{\R} x \partial_y e^{-\frac{|x-y|^2}{4\varepsilon t}}  u_0(y) \, dy |\\
            &=|\frac{1}{\sqrt{4 \varepsilon \pi t}} \int_{\R} e^{-\frac{|x-y|^2}{4\varepsilon t}} x \partial_y u_0(y) \, dy| \\
            &\leq |\frac{1}{\sqrt{4 \varepsilon \pi t}} \int_{\R} e^{-\frac{|x-y|^2}{4\varepsilon t}} (x-y) \partial_y  u_0(y) \, dy |\\
            &+|\frac{1}{\sqrt{4 \varepsilon \pi t}} \int_{\R} e^{-\frac{|x-y|^2}{4\varepsilon t}} y \partial_y  u_0(y) \, dy|
        \end{split}  
    \end{equation*}

So we have 
\begin{equation}
    \begin{split}
        &|\frac{1}{\sqrt{4 \varepsilon \pi t}} \int_{\R} e^{-\frac{|x-y|^2}{4\varepsilon t}} (x-y) \partial_y  u_0(y) \, dy | \\
        &\leq C \|u_0\|_{\dot{H}^1} \|e^{-\frac{|x-y|^2}{4\varepsilon t}} (x-y)\|_{L_y^2} \\
        &\leq C.
    \end{split} 
\end{equation}

For the second term, we have
\begin{equation}
    \begin{split}
        &|\frac{1}{\sqrt{4 \varepsilon \pi t}} \int_{\R} e^{-\frac{|x-y|^2}{4\varepsilon t}} y \partial_y  u_0(y) \, dy | \\
        &\leq C \|y \partial_y u_0\|_{L_y^\infty} \|e^{-\frac{|x-y|^2}{4\varepsilon t}}\|_{L_y^1} \\
        &\leq C \|y \partial_y u_0\|_{L_y^\infty} \\
        &\leq C, 
    \end{split}
\end{equation}
since $\partial_y u_0$ is compact supported.
Hence we have 
\begin{equation*}
    |x\partial_x u_\varepsilon^{(0)}(t,x)| \leq C,
\end{equation*}

We can iteratively consider the quantity $x^n \partial_x u_\varepsilon^{(0)}(t,x)$ for $n\geq 1$ and we have
\begin{equation}
    \label{cov:u0}
    |x^n \partial_x u_\varepsilon^{(0)}(t,x)| \leq C.
\end{equation} 

So we know that $\partial_x u_\varepsilon^{(0)}(t,x)$ is decay arbitrarily fast as $|x|\rightarrow \infty$.

We write 
\begin{equation}
    \Lah u_\varepsilon^{(0)}=(-\triangle)^{-\frac{1}{2}} \partial_x \partial_x u_\varepsilon^{(0)}.
\end{equation}

Note here that $(-\triangle)^{-\frac{1}{2}} \partial_x$ is a Hilbert transform via a Fourier multiplier $i\text{sgn}(\xi)$, so we have
\begin{equation}
    \Lah u_\varepsilon^{(0)}=\frac{1}{\pi} \text{p.v.} \ \int_{-\infty}^{\infty} \frac{\partial_x u_\varepsilon^{(0)}(t,y)}{x-y}\,dy.
\end{equation}

Therefore, for $x\cdot \Lah u_\varepsilon^{(0)}(t,x)$,  we have
\begin{equation}
    \label{cov:u01}
    \begin{split}
        |x\cdot \Lah u_\varepsilon^{(0)}(t,x)| &=\Big|\frac{1}{\pi} \  \text{p.v.} \ \int_{-\infty}^{\infty} \frac{x \partial_y u_\varepsilon^{(0)}(t,y)}{x-y}\,dy \Big| \\
        &\leq C \Big|\text{p.v.} \ \int_{-\infty}^{\infty} \frac{1}{(x-y)x^n}\,dy \Big|\\
        &\leq C.
    \end{split}
\end{equation}
Moreover, we can also conclude that $|x^n \cdot \Lah u_\varepsilon^{(0)}(t,x)|$ is bounded for large $x$. Hence $\Lah u_\varepsilon^{(0)}(t,x)$ is decay arbitrarily fast as $|x|\rightarrow \infty$. 

Thus we have
\begin{equation*}
    \lim_{|x|\rightarrow 0} u_\varepsilon^{(1)} =Q.
\end{equation*}

    From above argument, we know that $\lim_{|x|\rightarrow \infty} u_\varepsilon^{(j)} =Q$ holds whence 
    \begin{equation}
        \label{covyuj}
        \| x \cdot \Lah u_\varepsilon^{(j-1)}(x)\|_{L_x^\infty} \leq C.
    \end{equation}

    We show \eqref{covyuj} by iteration again. We first show that 
    \begin{equation}
        \label{covyu1}
        \|x \cdot \Lah u_\varepsilon^{(1)}\|_{L_x^\infty} \leq C.
    \end{equation}

    We know that 
    \begin{equation*}
        \Lah u_\varepsilon^{(1)} \simeq  K_\varepsilon \star \Lah (N(u_\varepsilon^{(0)})) +\Lah u_\varepsilon^{(0)}.
    \end{equation*}

    With \eqref{cov:u01}, we only need to show that
    \begin{equation*}
        \|x \cdot K_\varepsilon \star \Lah (N(u_\varepsilon^{(0)}))\|_{L_x^\infty} \leq C.
    \end{equation*}

We have
\begin{equation*}
    \begin{split}
       \Lah N(u_\varepsilon^{(0)}) (t,x)
       &=\frac{1}{\pi} \  \text{p.v.} \ \int_{-\infty}^{\infty} \frac{ \partial_y N(u_\varepsilon^{(0)})(t,y)}{x-y}\,dy \\
    \end{split}
\end{equation*}

We know that
\begin{equation*}
    \begin{split}
        \partial_y N(u_\varepsilon^{(0)}) &=\partial_y u_\varepsilon^{(0)} \times \Lah u_\varepsilon^{(0)}+ u_\varepsilon^{(0)} \times \partial_y \Lah u_\varepsilon^{(0)} \\
    \end{split} 
\end{equation*}

As showed in \eqref{cov:u0} and \eqref{cov:u01}, we know that $\partial_y u_\varepsilon^{(0)}$ and $\Lah u_\varepsilon^{(0)}$ decay arbitrarily fast as $|x|\rightarrow \infty$. Hence $ \partial_y N(u_\varepsilon^{(0)})$ is also decay arbitrarily fast as $|x|\rightarrow \infty$. So we can conclude that 
\begin{equation}
   |x^n \cdot \Lah N(u_\varepsilon^{(0)}) |\leq C, \ \text{for } n \geq 1.
\end{equation}

Thus we have that 
\begin{align}
        &|x \cdot  (K_\varepsilon \star \Lah N(u_\varepsilon^{(0)}))| \nonumber \\
        &\leq | \int_0^t \frac{1}{\sqrt{4\varepsilon\pi (t-s)}} \int_{\R} e^{-\frac{|x-y|^2}{4\varepsilon(t-s)}} x  \Lah N(u_\varepsilon^{(0)})(s,y)\, dy ds | \nonumber \\
        &\leq | \int_0^t \frac{1}{\sqrt{4\varepsilon\pi (t-s)}} \int_{\R} e^{-\frac{|x-y|^2}{4\varepsilon(t-s)}} (x-y)  \Lah N(u_\varepsilon^{(0)})(s,y)\, dy ds | \label{covu21}\\
        &+| \int_0^t \frac{1}{\sqrt{4\varepsilon\pi (t-s)}} \int_{\R} e^{-\frac{|x-y|^2}{4\varepsilon(t-s)}} y  \Lah N(u_\varepsilon^{(0)})(s,y)\, dy ds | \label{covu22}
\end{align}

For \eqref{covu21}, we have 
\begin{equation*}
    \begin{split}
        &| \int_0^t \frac{1}{\sqrt{4\varepsilon\pi (t-s)}} \int_{\R} e^{-\frac{|x-y|^2}{4\varepsilon(t-s)}} (x-y)  \Lah N(u_\varepsilon^{(0)})(s,y)\, dy ds | \\
        &\leq \int_0^t \frac{1}{\sqrt{4\varepsilon\pi (t-s)}} \|e^{-\frac{|x-y|^2}{4\varepsilon(t-s)}} (x-y)\|_{L_y^2} \|u_\varepsilon^{(0)}\|_{L_y^\infty}  \|\Lah u_\varepsilon^{(0)}\|_{L_y^2}\, ds \\
        &\leq C \sqrt{t} \|u_\varepsilon^{(0)}\|_{\dot{H^1}} \\
        &\leq C.
    \end{split}
\end{equation*}

For \eqref{covu22}, we have 
\begin{equation*}
    \begin{split}
        &| \int_0^t \frac{1}{\sqrt{4\varepsilon\pi (t-s)}} \int_{\R} e^{-\frac{|x-y|^2}{4\varepsilon(t-s)}} y  \Lah N(u_\varepsilon^{(0)})(s,y)\, dy ds | \\
        &\leq \int_0^t \frac{1}{\sqrt{4\varepsilon\pi (t-s)}} \|e^{-\frac{|x-y|^2}{4\varepsilon(t-s)}}\|_{L_y^1} \|u_\varepsilon^{(0)}\|_{L_y^\infty}  \|y \Lah N(u_\varepsilon^{(0)})\|_{L_y^\infty}\, ds \\
        &\leq C \sqrt{t} \|y \Lah N(u_\varepsilon^{(0)})\|_{L_y^\infty} \\
        &\leq C.
    \end{split}
\end{equation*}

Hence we have
\begin{equation*}
    |x \cdot \Lah(K_\varepsilon \star N(u_\varepsilon^{(0)})(t,x)| \leq C.
\end{equation*}

So we showed that \eqref{covyu1} holds. Then we use the same argument to show that 
\begin{equation*}
    \| x \cdot \Lah u_\varepsilon^{(j)}\|_{L_x^\infty} \leq C,
\end{equation*}
holds for all $j\geq 1$ to conclude the Lemma.
\end{proof}

\section{Global Well-Posedness for the Regularized Equation}
We consider the solution $u_\varepsilon$ for the regularized equation \eqref{eq:reg}. Although the solution $u_{\varepsilon}$ no longer map into $S^2$, it will still be bounded in $L_t^\infty L_x^\infty [0,T]\times \R$. We consider $v_{\varepsilon}=u_\varepsilon \cdot u_\varepsilon$.  We have $v_\varepsilon$ satisfies the following equation:
\begin{equation}
    \begin{split}
        \partial_t (u_{\varepsilon}\cdot u_{\varepsilon})&= 2u_{\varepsilon} \cdot (\varepsilon \triangle u_{\varepsilon} + u_{\varepsilon}\times \Lah u_{\varepsilon}) \\
       &=2 \varepsilon \ u_{\varepsilon}\cdot \triangle u_{\varepsilon} \\
       &= \varepsilon \triangle (u_{\varepsilon}\cdot u_{\varepsilon})  -2\varepsilon |\nabla u_{\varepsilon}|^2
    \end{split}
\end{equation}

Hence 
\begin{equation}
    \label{eq:v}
    \begin{cases}
        &\partial_t v_{\varepsilon} -\varepsilon \triangle v_{\varepsilon} =- 2\varepsilon |\nabla u_{\varepsilon}|^2 \leq 0 \\
        &v_{\varepsilon}(0,\cdot)= u_0 \cdot u_0=1
    \end{cases}
\end{equation}


We want to show that we have a priori bound for $v_{\varepsilon}$ in $L_t^\infty L_x^\infty ([0,T]\times \R)$.

\begin{Proposition}{Local Maximal Princinple}
    Let $\Omega\subseteq \R$ be an open and bounded set, and $u\in C^2(\Omega_T) \cap C^0 (\bar{\Omega}_T)$, where $\Omega_T=[0,T]\times \Omega$ satisfies:
    \begin{equation}
        \pt u -\triangle u \leq 0 \quad \text{in} \quad \Omega_T
    \end{equation}
    
    Then we have 
    \begin{equation}
        \sup_{\bar{\Omega}_T} u =\sup_{\partial \Omega_T} u,
    \end{equation}
    where $\partial \Omega_T:=([0,T]\times \partial \Omega) \cup (\{0\}\times \Omega)$.
\end{Proposition}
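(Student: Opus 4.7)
The plan is to prove this classical weak parabolic maximum principle by the standard perturbation argument. I would begin by introducing the auxiliary function $v_\delta(t,x) := u(t,x) - \delta t$ for an arbitrary $\delta > 0$. A direct computation gives
\begin{equation*}
\pt v_\delta - \triangle v_\delta \ = \ (\pt u - \triangle u) - \delta \ \le \ -\delta \ < \ 0 \quad \text{in } \Omega_T,
\end{equation*}
so $v_\delta$ satisfies a \emph{strict} parabolic inequality. The purpose of introducing the perturbation is precisely to upgrade the non-strict hypothesis on $u$ to a strict one, which will rule out interior maxima by elementary calculus.

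Next I would show that $v_\delta$ attains its supremum on the parabolic boundary $\partial \Omega_T = ([0,T] \times \partial \Omega) \cup (\{0\} \times \Omega)$. Since $\bar{\Omega}_T$ is compact and $v_\delta \in C^0(\bar{\Omega}_T)$, the supremum is attained at some $(t_0, x_0) \in \bar{\Omega}_T$. I proceed by contradiction: suppose $(t_0, x_0) \notin \partial \Omega_T$, so that $x_0 \in \Omega$ and $t_0 \in (0,T]$. If $t_0 \in (0,T)$, the point is interior to $\Omega_T$ and the standard second-order necessary conditions give $\pt v_\delta(t_0, x_0) = 0$ and $\triangle v_\delta(t_0, x_0) \le 0$. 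If $t_0 = T$, then $\pt v_\delta(T, x_0) \ge 0$ (one-sided derivative from below) and still $\triangle v_\delta(T, x_0) \le 0$ since $x_0$ lies in the interior of $\Omega$. In either subcase $(\pt - \triangle) v_\delta(t_0, x_0) \ge 0$, contradicting the strict inequality above. Hence $\sup_{\bar{\Omega}_T} v_\delta = \sup_{\partial \Omega_T} v_\delta$.

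To finish I would pass back from $v_\delta$ to $u$ via
\begin{equation*}
\sup_{\bar{\Omega}_T} u \ \le \ \sup_{\bar{\Omega}_T} v_\delta + \delta T \ = \ \sup_{\partial \Omega_T} v_\delta + \delta T \ \le \ \sup_{\partial \Omega_T} u + \delta T,
\end{equation*}
and let $\delta \to 0$; combined with the trivial bound $\sup_{\partial \Omega_T} u \le \sup_{\bar{\Omega}_T} u$, this yields the claimed equality. The only subtlety worth flagging is the endpoint case $t_0 = T$, where one must use the one-sided inequality $\pt v_\delta(T, x_0) \ge 0$ rather than equality; the sign is nevertheless exactly what is needed to close the contradiction. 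Beyond this minor point the argument is a textbook application of interior-extremum calculus, so no real obstacle is expected.
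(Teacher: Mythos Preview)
Your proposal is correct and follows essentially the same route as the paper: perturb $u$ by $-\delta t$ to obtain a strict subsolution, rule out a non-boundary maximum via the first- and second-order necessary conditions, and then let $\delta\to 0$. The only cosmetic difference is that the paper first reduces to times $T'<T$ (so the putative maximum is genuinely interior in time), whereas you handle the top slice $t_0=T$ directly with the one-sided inequality $\pt v_\delta(T,x_0)\ge 0$; both treatments are standard and equivalent.
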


\begin{proof}
    It suffices to show the result for all $T' < T$. For each $T'$, we assume the maximum is attained at $(t_0,x_0)$. 

    We first consider the case when $\triangle u > \pt u$ in $\Omega_T$. We show that an interior maximum cannot be attained.

    Assume $(t_0,x_0) \notin \partial \Omega_T$, then at $(t_0,x_0)$, we have 
    \begin{equation}
        \begin{cases}
            \pt u(t_0) \geq 0 \\
            \triangle u(t_0,x_0) \leq 0
        \end{cases}
    \end{equation}

    Hence, we have $\triangle u(t_0,x_0) \leq \pt u(t_0)$. This contradicts the assumption that $\triangle u > \pt u$ in $\Omega_T$. Hence, we have $(t_0,x_0) \in \partial \Omega_T$.

    For the $\triangle u=\pt u$, we consider $u_{\lambda}(t,x)= u(t,x)-\lambda t$, for $\lambda >0$. We see that 
    \begin{equation*}
        \pt u_{\lambda} -\triangle u_{\lambda} =\pt u -\triangle u -\lambda < 0 \quad \text{in} \quad \Omega_T
    \end{equation*}

    Hence by the strict case above, we obtain
    \[
        \sup_{\bar{\Omega}_T} u_{\lambda}= \sup_{\partial \Omega_T} u_{\lambda}.
    \]
    
    Then we can conclude that
    \[
        \sup_{\bar{\Omega}_T} u \leq \sup_{\bar{\Omega}_T} u_{\lambda}+\lambda T =\sup_{\partial \Omega_T} u_{\lambda} +\lambda T \leq \sup_{\partial \Omega_T} u  +\lambda T
    \]
    
    Let $\lambda \rightarrow 0$, we have 
    \[
        \sup_{\bar{\Omega}_T} u = \sup_{\partial \Omega_T} u.
    \]
\end{proof}

\begin{theorem}{Global Maximal Principle}
    \label{thm:maximum_v}
    For $T>0$, and a smooth function $u_\varepsilon:[0,T] \times \mathbb{R} \rightarrow \mathbb{R}^3$ solves \eqref{eq:reg}. We further assume that there's a point $Q \in S^2$, s.t. $u_\varepsilon(t,x) \rightarrow Q$ as $|x| \rightarrow \infty$ for every $t>0$.
    
    Then $v_\varepsilon:=u_\varepsilon \cdot u_{\varepsilon}$ satisfies \eqref{eq:v} and we have 
    \begin{equation}
        \max_{t\in [0,T]} \|v_\varepsilon\|_{L^\infty} \leq C
    \end{equation}
        
\end{theorem}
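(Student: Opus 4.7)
The plan is to extend the Local Maximum Principle from a bounded domain to all of $\R$ by exhausting space with balls $B_R=(-R,R)$ and sending $R\to\infty$. Since $v_\varepsilon$ satisfies $\pt v_\varepsilon-\varepsilon\triangle v_\varepsilon=-2\varepsilon|\nabla u_\varepsilon|^2\le 0$ by \eqref{eq:v}, it is a classical subsolution of the heat equation, so the hypotheses of the Local Maximum Principle are in place on every cylinder $\Omega_T^R:=[0,T]\times B_R$ (with $u_\varepsilon$ and hence $v_\varepsilon$ in $C^2((0,T]\times\R)\cap C^0([0,T]\times\R)$ by standard parabolic regularity for \eqref{eq:reg}).

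First I would apply the Local Maximum Principle on $\Omega_T^R$ to get
\begin{equation*}
\sup_{\bar\Omega_T^R} v_\varepsilon \;=\; \sup_{\partial \Omega_T^R} v_\varepsilon
\;=\; \max\Bigl\{\sup_{B_R} v_\varepsilon(0,\cdot),\ \sup_{[0,T]\times\{\pm R\}} v_\varepsilon\Bigr\}.
\end{equation*}
The first piece is trivial: because $u_0$ maps into $S^2$, we have $v_\varepsilon(0,\cdot)=u_0\cdot u_0\equiv 1$. So everything reduces to controlling $v_\varepsilon$ on the lateral boundary $\{\pm R\}\times[0,T]$ as $R\to\infty$.

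For that step, I would invoke the previous lemma, which gives $u_\varepsilon(t,x)\to Q$ as $|x|\to\infty$ for each $t\in[0,T)$. The key additional observation is that the quantitative estimates in the lemma's iteration (bounds of the form $|x\cdot(u_\varepsilon^{(j)}-Q)|\le C$ and $\|x\cdot \Lah u_\varepsilon^{(j)}\|_{L^\infty_x}\le C$) depend only on $T$, $\varepsilon$, and the fixed norms of $u_0$, so the convergence $u_\varepsilon(t,x)\to Q$ is uniform in $t\in[0,T]$ with decay rate $|u_\varepsilon(t,x)-Q|\lesssim 1/|x|$. Since
\begin{equation*}
|v_\varepsilon(t,x)-1| \;=\; |u_\varepsilon\cdot u_\varepsilon - Q\cdot Q| \;\le\; |u_\varepsilon-Q|\bigl(|u_\varepsilon|+|Q|\bigr),
\end{equation*}
we also get $v_\varepsilon(t,x)\to 1$ as $|x|\to\infty$ uniformly in $t\in[0,T]$. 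Consequently, for each $\eta>0$ there is $R_0(\eta)$ such that $v_\varepsilon(t,\pm R)\le 1+\eta$ for all $R\ge R_0$ and $t\in[0,T]$.

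Combining the two bounds yields $\sup_{\Omega_T^R} v_\varepsilon \le 1+\eta$ for every $R\ge R_0(\eta)$, and since any $x\in\R$ lies in some $B_R$ we get $\sup_{[0,T]\times\R} v_\varepsilon \le 1+\eta$; letting $\eta\to 0$ gives $\|v_\varepsilon\|_{L^\infty_{t,x}}\le 1$, which is the claim with $C=1$. The main obstacle is the uniform-in-$t$ spatial decay on the lateral boundary; the pointwise statement from the prior lemma is not by itself enough for the limit procedure, and one has to revisit its proof to extract the rate and uniformity in $t$. The regularity needed to invoke the Local Maximum Principle is a lesser but genuine issue, handled by standard interior parabolic regularity bootstrapped from the $X_T$-solution produced in Theorem~\ref{thm:local_existence}.
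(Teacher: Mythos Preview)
Your proposal is correct and follows essentially the same approach as the paper: apply the Local Maximum Principle on cylinders $[0,T]\times B_R$, use the convergence $v_\varepsilon(t,x)\to 1$ at spatial infinity to bound the lateral boundary by $1+\eta$, and let $R\to\infty$ then $\eta\to 0$ to conclude $\|v_\varepsilon\|_{L^\infty_{t,x}}\le 1$. You are in fact more careful than the paper about the uniformity-in-$t$ of the spatial decay, which the paper simply asserts (``for any $\lambda>0$, there existed a $R>0$ s.t.\ $|v_\varepsilon|\le 1+\lambda$ on $[0,T]\times(\R\setminus B_R)$'') without revisiting the quantitative estimates of the preceding lemma.
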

\begin{proof}
    For any $R>0$, we consider the time slab $\Omega_{T,R}=(0,T) \times B_R$, where $B_R=(-R,R)$. On $\Omega_{T,R}$, we know that $v_\varepsilon$ satisfies
    \begin{equation*}
        \pt v_\varepsilon - \varepsilon \triangle v_\varepsilon \leq 0.
    \end{equation*}
    
    Hence by the local maximum principle, we have 
    \begin{equation}
        |v_\varepsilon| \leq \max_{\partial \Omega_{T,R}} |v_\varepsilon| = 1
    \end{equation}


    Furthermore, We know that $u_\varepsilon(t,x) \rightarrow Q$ as $|x|\rightarrow \infty$, hence $v_\varepsilon(t,x) \rightarrow 1$ as $|x|\rightarrow \infty$ for every $t>0$. Therefore, for any $\lambda >0$, there existed a $R>0$, s.t. 
    \begin{equation}
        |v_\varepsilon| \leq 1+\lambda \quad \text{on} \ [0,T]\times (\R \setminus B_R).
    \end{equation}

    Let $\lambda \rightarrow 0$, we conclude that
    \begin{equation}
        \max_{t\in [0,T]} \|v_\varepsilon\|_{L_x^\infty(\R)} \leq 1
    \end{equation}
    
\end{proof}

\begin{theorem}
    \label{thm:globalh1}
    For $u_\varepsilon$ defined by Theorem \ref{thm:local_existence}, we can extend the solution to a global solution  $u_{\varepsilon} \in L_t^\infty L_x^\infty \cap C_t^0([0,\infty), \dot{H}^1(\R)) $.
\end{theorem}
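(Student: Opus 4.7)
The plan is to combine the $L^\infty$ bound coming from the maximum principle (Theorem \ref{thm:maximum_v}) with an energy estimate for $\|\nabla u_\varepsilon\|_{L^2}^2$, and then run a standard continuation argument to promote the local solution of Theorem \ref{thm:local_existence} to a global one.

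First, from Theorem \ref{thm:maximum_v} we already have $\|u_\varepsilon(t,\cdot)\|_{L_x^\infty}\le 1$ on the entire interval of existence, so the $L_t^\infty L_x^\infty$ part of the $X_T$-norm is under control for free. The key step is therefore an a priori estimate on $\|\nabla u_\varepsilon\|_{L^2}$. Differentiating in time and using \eqref{eq:reg} together with integration by parts, I would derive
\begin{equation*}
\frac{d}{dt}\int_{\R}|\nabla u_\varepsilon|^2\,dx
=-2\varepsilon\int_{\R}|\triangle u_\varepsilon|^2\,dx
-2\int_{\R}\triangle u_\varepsilon\cdot(u_\varepsilon\times\Lah u_\varepsilon)\,dx.
\end{equation*}
The cross term is estimated by Cauchy--Schwarz and Young's inequality,
\begin{equation*}
\Big|2\int \triangle u_\varepsilon\cdot(u_\varepsilon\times\Lah u_\varepsilon)\,dx\Big|
\le\varepsilon\|\triangle u_\varepsilon\|_{L^2}^2
+\frac{1}{\varepsilon}\|u_\varepsilon\|_{L_x^\infty}^2\|\Lah u_\varepsilon\|_{L^2}^2,
\end{equation*}
and since $\|\Lah u_\varepsilon\|_{L^2}=\|\nabla u_\varepsilon\|_{L^2}$ while $\|u_\varepsilon\|_{L_x^\infty}\le 1$, the parabolic term absorbs the $\triangle u_\varepsilon$ contribution and we are left with
\begin{equation*}
\frac{d}{dt}\|\nabla u_\varepsilon\|_{L^2}^2
\le -\varepsilon\|\triangle u_\varepsilon\|_{L^2}^2
+\frac{1}{\varepsilon}\|\nabla u_\varepsilon\|_{L^2}^2.
\end{equation*}
Grönwall's inequality then yields the a priori bound $\|\nabla u_\varepsilon(t)\|_{L^2}^2\le \|\nabla u_0\|_{L^2}^2 \exp(t/\varepsilon)$, which is finite on any compact time interval $[0,T]$.

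With both $\|u_\varepsilon\|_{L_t^\infty L_x^\infty([0,T]\times\R)}$ and $\|u_\varepsilon\|_{L_t^\infty \dot{H}^1([0,T]\times\R)}$ controlled by quantities depending only on $\varepsilon$, $T$, and $u_0$, I would conclude by a standard continuation argument. Let $T^\ast:=T(u_0)$ be the maximal existence time provided by Theorem \ref{thm:local_existence} and suppose for contradiction that $T^\ast<\infty$. The two a priori bounds show that $u_\varepsilon(t,\cdot)$ stays in a fixed ball of $L^\infty\cap \dot{H}^1$ as $t\uparrow T^\ast$, so one can take $u_\varepsilon(T^\ast-\delta,\cdot)$ for small $\delta>0$ as new initial data and apply Theorem \ref{thm:local_existence} to obtain a solution on an interval $[T^\ast-\delta,T^\ast-\delta+\tau]$ with $\tau>0$ depending only on $\|u_\varepsilon(T^\ast-\delta,\cdot)\|_{L^\infty\cap \dot{H}^1}$; choosing $\delta<\tau$ extends the solution beyond $T^\ast$, contradicting maximality. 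Continuity in time in $\dot{H}^1$ on each $[0,T]$ is inherited from the Duhamel representation together with the uniform bounds.

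The step I expect to be the main technical point is making sure the cross term is handled cleanly: naively it looks like a loss of derivatives (one has $\triangle u_\varepsilon$ in $L^2$ competing with $\Lah u_\varepsilon$ in $L^2$), but the parabolic dissipation $-2\varepsilon\|\triangle u_\varepsilon\|_{L^2}^2$ is exactly strong enough to absorb half of it via Young's inequality, leaving only a linear-in-$\|\nabla u_\varepsilon\|_{L^2}^2$ term that Grönwall can handle. The $\varepsilon$-dependence in the exponential $e^{t/\varepsilon}$ is harmless for this theorem since $\varepsilon>0$ is fixed; the $\varepsilon$-uniform bounds needed to pass to the limit in the proof of Theorem \ref{thm:main} will come separately from the $\dot{H}^{1/2}$ energy identity (where the triple product $\Lah u_\varepsilon\cdot(u_\varepsilon\times \Lah u_\varepsilon)$ vanishes pointwise).
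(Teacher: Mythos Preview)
Your proposal is correct and follows essentially the same approach as the paper: an $\dot{H}^1$ energy identity in which the parabolic dissipation $-2\varepsilon\|\triangle u_\varepsilon\|_{L^2}^2$ absorbs the cross term via Young's inequality, followed by Gr\"onwall and a standard continuation argument based on the a priori bounds. Your write-up is in fact a bit more explicit than the paper's (you spell out $\|\Lah u_\varepsilon\|_{L^2}=\|\nabla u_\varepsilon\|_{L^2}$ and track the constants more carefully), and your closing remark about why the $\varepsilon$-dependence is harmless here and where the $\varepsilon$-uniform control will come from is exactly the right orientation for what follows.
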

\begin{proof}
We consider the energy term
\begin{equation}
    E(u_\varepsilon)=\frac{1}{2} \int_{\mathbb{R}} |\nabla u_\varepsilon|^2 \, dx.
\end{equation}

We have 
\begin{equation}
    \begin{split}
        &\frac{d}{dt} \Big(\frac{1}{2} \int_{\mathbb{R}} |\nabla u_\varepsilon|^2 \, dx \Big) \\
        &=\int \nabla \pt u_\varepsilon \cdot \nabla u_\varepsilon \, dx \\
        &=-\varepsilon \int | \triangle u_\varepsilon|^2 dx - \int(u_\varepsilon \times \Lah u_\varepsilon) \cdot \triangle u_\varepsilon \, dx 
    \end{split}
\end{equation}

As $u_\varepsilon$ is bounded by Theorem \ref{thm:maximum_v}, we use the Cauchy-Schwarz inequality to conclude that 
\begin{equation}
    \begin{split}
        &|\int(u_\varepsilon \times \Lah u_\varepsilon) \cdot \triangle u_\varepsilon \, dx| 
        \\ &\leq \varepsilon \int | \triangle u_\varepsilon|^2 dx +\frac{C}{\varepsilon} \int | \nabla u_\varepsilon|^2 dx, \\
    \end{split}
\end{equation}
where $C$ is a constant depending on $\|u_\varepsilon(0,\cdot)\|_{L^\infty}$.

Therefore, we have
\begin{equation*}
    \partial_t E(u_\varepsilon) \leq \frac{C}{\varepsilon} E(u_\varepsilon).
\end{equation*}

By Gronwall inequality, we conclude that
\begin{equation}
    \label{eq:energy}
    E(u_\varepsilon(t)) \leq \frac{C}{\varepsilon} e^{\frac{C}{\varepsilon} t} \quad \forall t\geq 0.
\end{equation}

Therefore, we know that 
\begin{equation}
    \|u\|_{X_T} \leq \frac{C}{\varepsilon} e^{\frac{C}{\varepsilon} t}
\end{equation}

Let's denote $T_{\max}$ as the maximum existence time of the solution $u_{\varepsilon}$. We then show that $T_{\max} =\infty$ by contradiction.

Assume that $T_{\max} < \infty$, for solution $u_{\varepsilon}$ on $[0,T_{\max}]\times \R$, we then consider the Cauchy problem with initial data $\tilde{u}_{\varepsilon}(0,\cdot)=u_{\varepsilon}(T_{\max}-\delta)$ for a small $\delta>0$. 

The local existence of solution from Theorem \ref{thm:local_existence} implies that there exists a unique solution $\tilde{u}_{\varepsilon}(t)$ in $L_t^\infty L_x^\infty \cap C_t^0([T_{\max}-\delta, \tilde{T} ]\times \R)$ for some $\tilde{T}>T_{\max}$ and small $\delta>0$. The uniqueness of the solution implies that $u_\varepsilon$ and $\tilde{u}$ are the same on $[T_{\max}-\delta, T_{\max}]\times \R$. Thus the maximum existence time $T_{\max}=\infty$. So the solution $u_{\varepsilon}$ is a global solution.

\end{proof}

Moreover, the global $\dot{H}^{1}$ can be further extend to a global $\dot{H}^{\frac{1}{2}}$ solution.

\begin{theorem}
    \label{thm:globalh12}
    For $T, \varepsilon>0$, and a smooth initial data $u_0 \in \dot{H}^1 \cap \dot{H}^{\frac{1}{2}} (\R, S^2)$ which is constant outside of a compact set. The Cauchy problem of \eqref{eq:reg} admits a unique solution  $u_{\varepsilon} \in L_t^\infty L_x^\infty \cap C_t^0([0,\infty), \dot{H}^{\frac{1}{2}}(\R)) $ such that $\lim_{|x|\rightarrow \infty} \ u(t,x) =Q$ for a fixed $Q\in S^2$.
\end{theorem}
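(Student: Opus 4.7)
The plan is to upgrade the global $\dot{H}^1$ solution from Theorem \ref{thm:globalh1} to a global $\dot{H}^{\frac{1}{2}}$ solution by proving an a priori monotonicity of the $\dot{H}^{\frac{1}{2}}$-energy along the parabolically regularized flow. The essential observation is that the half-wave nonlinearity is pointwise orthogonal to $\Lah u_\varepsilon$, so that only the viscous dissipation survives when one differentiates $\int |\Laq u_\varepsilon|^2\,dx$ in time.

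\textbf{Step 1 (energy identity).} For the smooth global $\dot{H}^1$ solution $u_\varepsilon$ from Theorem \ref{thm:globalh1}, I would compute
\begin{equation*}
\frac{d}{dt}\int_\R |\Laq u_\varepsilon|^2\,dx = 2\int_\R \Lah u_\varepsilon \cdot \pt u_\varepsilon\,dx = 2\varepsilon\int_\R \Lah u_\varepsilon \cdot \triangle u_\varepsilon\,dx + 2\int_\R \Lah u_\varepsilon \cdot (u_\varepsilon \times \Lah u_\varepsilon)\,dx.
\end{equation*}
The nonlinear term vanishes pointwise since $a\cdot(b\times a)=0$, while by Plancherel the viscous term equals $-2\varepsilon\|(-\triangle)^{\frac{3}{4}} u_\varepsilon\|_{L^2}^2\le 0$. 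Integrating in time gives the a priori bound $\|u_\varepsilon(t)\|_{\dot{H}^{\frac{1}{2}}}\le \|u_0\|_{\dot{H}^{\frac{1}{2}}}$ for every $t\ge 0$, which rules out any loss of $\dot{H}^{\frac{1}{2}}$ regularity.

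\textbf{Step 2 (continuity, uniqueness, and boundary condition).} With this uniform bound in hand, I would return to the mild formulation
\[
u_\varepsilon(t) = K_\varepsilon(t)\star u_0 + \int_0^t K_\varepsilon(t-s)\star N(u_\varepsilon)(s)\,ds
\]
to verify $u_\varepsilon\in C_t^0\dot{H}^{\frac{1}{2}}$. The free evolution is strongly continuous in $\dot{H}^{\frac{1}{2}}$ via the multiplier $e^{-\varepsilon|\xi|^2 t}$; for the Duhamel term, the bounds from Theorems \ref{thm:maximum_v} and \ref{thm:globalh1} yield $N(u_\varepsilon)\in L_t^\infty L_x^2$, and the smoothing of $K_\varepsilon$ upgrades this to $C_t^0\dot{H}^{\frac{1}{2}}$. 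Uniqueness in this class follows from the same energy argument applied to the difference of two candidate solutions as in Theorem \ref{thm:local_existence}, and the property $\lim_{|x|\to\infty} u_\varepsilon(t,x)=Q$ is inherited from the iterative argument in the Lemma following Theorem \ref{thm:local_existence}.

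\textbf{Main obstacle.} The delicate point is the rigorous justification of the energy identity itself, since a priori $u_\varepsilon$ only lies in $\dot{H}^1$ and the pairing $\int \Lah u_\varepsilon\cdot \triangle u_\varepsilon\,dx$ is not immediately absolutely convergent. To bypass this, I would mollify $u_\varepsilon^\delta := \rho_\delta\star u_\varepsilon$, perform the computation for $u_\varepsilon^\delta$ where every operation is classical, and then let $\delta\to 0$, using the $\dot{H}^1$ regularity and the decay of $u_\varepsilon-Q$ at infinity established in the preceding Lemma to guarantee absolute convergence on the Fourier side and pass to the limit in each term.
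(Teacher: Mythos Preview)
Your proposal is correct and follows essentially the same route as the paper: the core is the monotonicity computation $\frac{d}{dt}\int|\Laq u_\varepsilon|^2\,dx=-2\varepsilon\|(-\triangle)^{3/4}u_\varepsilon\|_{L^2}^2\le 0$ via the orthogonality $\Lah u_\varepsilon\cdot(u_\varepsilon\times\Lah u_\varepsilon)=0$, combined with $\dot{H}^{1/2}$-continuity of the mild solution (the paper obtains the latter by interpolating the Duhamel term between $L^2$ and $\dot{H}^1$ rather than invoking heat smoothing, but the effect is the same). Your mollification remark to justify the energy identity is in fact more careful than the paper, which performs the computation formally.
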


\begin{proof}
    

We now consider the case where the initial data $u_0$ is in $\dot{H}^{\frac{1}{2}}$. 
For the solution $u_\varepsilon= K_\varepsilon \star N(u_\varepsilon)+ K_\varepsilon \star u_0$, we first show that $K_\varepsilon \star u_0$ is in $C_t^0([0,\infty), \dot{H}^{\frac{1}{2}}(\R))$.  We have

\begin{equation}
    \begin{split}
        &\|K_\varepsilon \star u_0(t,\cdot)\|_{L_t^\infty \dot{H}_x^{\frac{1}{2}}} \\
        &= \||\xi|^{\frac{1}{2}} \hat{K}_\varepsilon(t,\xi) \hat{u}_0(\xi) \|_{L_t^\infty L_{\xi}^2} \\
        &=\| |\xi|^{\frac{1}{2}} e^{-\varepsilon|\xi|^2 t} \hat{u}_0(\xi) \|_{L_t^\infty L_{\xi}^2} \\
        &\leq \|u_0\|_{\dot{H}^{\frac{1}{2}}}
    \end{split}
\end{equation}

Next, we consider the $K_\varepsilon \star N(u_\varepsilon)$ part.  We first use the interpolation theorem to show that it is in $ C_{t,loc}^0([0,\infty), \dot{H}^{\frac{1}{2}}(\R))$.

We know that 
\[
\| f\|_{\dot{H}^{\frac{1}{2}} }  \leq \|f\|^{\frac{1}{2}}_{L^2} \| f\|^{\frac{1}{2}}_{\dot{H}^1},
 \]

 thus we only need to show that $ K_\varepsilon \star N(u_\varepsilon) \in L_x^2$. We have

\begin{equation*}
    \begin{split}
      &\| K_\varepsilon \star N(u_\varepsilon)(t,\cdot)\|_{L_x^2} \\
       &=\| \int_0^t \int_{\R} K_\varepsilon(t-s,x-y) (u \times \Lah u)(s,y) \, dy ds\|_{L_x^2}\\
       &\leq \int_0^t \|K_\varepsilon(t-s)\|_{L_x^1} \|N(u_\varepsilon)(s)\|_{L_x^2} \ ds\\
       &\leq t \|u_\varepsilon\|_{L_t^\infty \dot{H}_x^1}
    \end{split}
\end{equation*}

Hence we know that $K_\varepsilon \star N(u_\varepsilon)(t,\cdot) \in L_x^2(\R)$. So we conclude that $K_\varepsilon \star N(u_\varepsilon)(t,\cdot) \in \dot{H}_x^{\frac{1}{2}}(\R)$. Thus we know that $u_\varepsilon$ is in $C_{t,loc}^0([0,\infty), \dot{H}_x^{\frac{1}{2}}(\R))$.


For a solution $u_\varepsilon(t) \in \dot{H}_x^{\frac{1}{2}}$ of \eqref{eq:reg}, we consider the critical energy 

\begin{equation}
    \label{eq:critical}
    \begin{split}
        E_c (u_\varepsilon(t))&= \frac{1}{2} \int_{\mathbb{R}} |\Laq u_\varepsilon(t)|^2 \, dx.
    \end{split}
\end{equation}

We have 
\begin{equation}
    \begin{split}
        &\frac{d}{dt} \frac{1}{2} \int_{\mathbb{R}} |\Laq u_\varepsilon|^2 \\
        &=\int \pt u_\varepsilon \cdot \Lah u_\varepsilon \, dx \\
        &= \int \varepsilon \triangle u \cdot \Lah u_\varepsilon \, dx \\
        &=- \varepsilon \int |(-\triangle)^\frac{3}{4} u_\varepsilon|^2 \, dx \\
        &\leq 0
    \end{split}
\end{equation}

Hence we know that 
\begin{equation}
    \label{eq:h12uni}
    \sup_{t\in [0,\infty)}\|u_\varepsilon\|_{\dot{H}^{\frac{1}{2}}} \leq \|u_0\|_{\dot{H}^{\frac{1}{2}}}.
\end{equation}

Hence the local solution $u_\varepsilon$ is also a global solution of \eqref{eq:reg} in $C_{t}^0([0,\infty), \dot{H}_x^{\frac{1}{2}}(\R))$.
\end{proof}

Furthermore, we also know that $u_{\varepsilon}(t,\cdot) \in L_{x,loc}^2(\R)$ by the following proposition.
\begin{Proposition}[Proposition 1.37 in \cite{bahouri2011fourier}]
    \label{prop:h12}
    For $s\in (0,1)$, and $u \in \dot{H}^s(\R^n)$. Then $u \in L_{loc}^2(\R^n)$.
\end{Proposition}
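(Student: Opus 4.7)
The plan is to use a homogeneous Littlewood--Paley decomposition $u = \sum_{j\in\Z} \dot{\Delta}_j u$ and estimate the high- and low-frequency contributions separately, localizing in space only when necessary.

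For the high-frequency part $j\geq 0$, the full $L^2(\R^n)$ norm is already under control: by Cauchy--Schwarz and the definition of the $\dot{H}^s$ norm,
\begin{equation*}
    \Big\|\sum_{j\geq 0} \dot{\Delta}_j u\Big\|_{L^2(\R^n)}
    \leq \Big(\sum_{j\geq 0} 2^{-2js}\Big)^{1/2}\Big(\sum_{j\geq 0} 2^{2js}\|\dot{\Delta}_j u\|_{L^2}^2\Big)^{1/2}
    \lesssim \|u\|_{\dot{H}^s},
\end{equation*}
which is finite because $s>0$. Hence this piece lies in $L^2(\R^n)$, and a fortiori in $L^2_{loc}(\R^n)$.

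For the low-frequency part $j<0$, I would localize to an arbitrary ball $B_R\subset \R^n$ and trade regularity for decay via Bernstein's inequality $\|\dot{\Delta}_j u\|_{L^\infty}\lesssim 2^{jn/2}\|\dot{\Delta}_j u\|_{L^2}$, which gives
\begin{equation*}
    \|\dot{\Delta}_j u\|_{L^2(B_R)} \lesssim R^{n/2}\,2^{j(n/2-s)}\cdot 2^{js}\|\dot{\Delta}_j u\|_{L^2}.
\end{equation*}
When $s<n/2$ the geometric factor $2^{j(n/2-s)}$ is summable over $j<0$, and one more Cauchy--Schwarz closes the estimate against $\|u\|_{\dot{H}^s}$. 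This already covers the whole range $s\in(0,1)$ when $n\geq 2$, and $s\in(0,1/2)$ when $n=1$.

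The main obstacle is the critical/supercritical range $s\geq n/2$, which within $s\in(0,1)$ only occurs for $n=1$, $s\in[1/2,1)$, and is precisely the situation most relevant to this paper. For $s=1/2$ the Bernstein sum above diverges logarithmically, so I would instead appeal to the critical Sobolev embedding $\dot{H}^{1/2}(\R)\hookrightarrow \BMO(\R)$ together with John--Nirenberg, which yields $\BMO\subset L^p_{loc}$ for every $p<\infty$, and in particular $L^2_{loc}$. For $s\in(1/2,1)$, Morrey's embedding $\dot{H}^s(\R)\hookrightarrow C^{s-1/2}(\R)$ (modulo constants) makes $u$ locally H\"older continuous, so local $L^2$ integrability is automatic. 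The delicate point is the end-point $s=n/2$, where one must abandon any $L^\infty$-type Bernstein estimate for the low frequencies and rely on the BMO realization of $\dot{H}^{n/2}$.
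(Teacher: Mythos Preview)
The paper does not prove this proposition; it is stated with a citation to Proposition~1.37 of \cite{bahouri2011fourier} and used as a black box. There is therefore no in-paper proof against which to compare your proposal.

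On its own merits, your argument is correct. The high/low frequency split via Littlewood--Paley, with the condition $s>0$ controlling the high-frequency part in full $L^2(\R^n)$ and Bernstein together with $s<n/2$ controlling the low-frequency part in $L^2(B_R)$, is the standard route to this result. Your treatment of the borderline one-dimensional range $s\in[1/2,1)$ via the critical embedding $\dot{H}^{1/2}(\R)\hookrightarrow\BMO$ (plus John--Nirenberg) and the Morrey embedding for $s>1/2$ is also correct; the only caveat is that for $s\ge n/2$ elements of $\dot{H}^s$ are determined only modulo constants, so the conclusion should be read as holding for a suitable representative, exactly as you already note in the Morrey step.
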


\section{Weak Solution}
    We know that for a global weak solution of \eqref{eq:reg} $u_{\varepsilon}$, it satisfies

\begin{equation}
    \label{eq:weaksol}
    \begin{split}
        &-\int_0^\infty \int_{\R} u_{\varepsilon} \cdot \varphi_t dx dt - \int_{\R} u_0 \varphi(x) dx \\
        &= \varepsilon \int_0^\infty \int_{\R} u_{\varepsilon}  \cdot \triangle \varphi dx dt + \int_0^\infty \int_{\R} \Laq (u_{\varepsilon}  \times \varphi) \cdot \Laq u_{\varepsilon} dx dt,
    \end{split}
\end{equation}
for any $\varphi \in C_c^\infty \cap \mathcal{S}(\R, \R^3)$.

We want to show that as $\varepsilon \rightarrow 0$ (up to a subsequence), we have a weak solution $u_{\star} \in L_{t,loc}^2([0,\infty),\dot{H}^{\frac{1}{2}}(\R))$ for \eqref{eq3:halfwave} s.t.

\begin{equation}
    \begin{split}
        -\int_0^\infty \int_{\R} u_{\star} \cdot \varphi_t dx dt - \int_{\R} u_0 \varphi(x) dx = \int_0^\infty \int_{\R} \Laq (u_{\star}  \times \varphi ) \cdot \Laq u_{\star} dx dt.
    \end{split}
\end{equation}

We consider a monotone sequence of $T_n:=[0,n]$ s.t. $T_n \rightarrow \infty$ as $n\rightarrow \infty$, and $U_n\subseteq \R$ be a sequence of open interval s.t. $U_n \rightarrow \R$ as $n\rightarrow \infty$.  
On each local domain $T_n\times U_n$, from Theorem \ref{thm:globalh12}, we know there exists a solution $u_{n,\varepsilon} \in L_{t,x}^\infty \cap C_t^0([0,T_n],\dot{H}^{\frac{1}{2}}(U_n))$.  We will show that there exists a corresponding weak solution $u_{n,\star}$ for \eqref{eq:reg} s.t. $u_{n,\varepsilon} \rightharpoonup u_{n,\star}$ in $L_{t}^2(T_n,\dot{H}^{\frac{1}{2}}(U_n))$.
 Then we use a Contour argument to show that there exists a $u_{\star}$ that is a weak solution for \eqref{eq3:halfwave} on $[0,\infty) \times \R$ as defined in \eqref{weaksol}.

\subsection{Local Weak Solution}
For each local domain $T_n\times U_n$, we want to show that as $\varepsilon \rightarrow 0$ (up to a subsequence), 
we have 

\begin{equation}
    \label{eq:weak1}
    \int_{T_n} \int_{U_n} u_{\varepsilon} \cdot \varphi_t dx dt  \rightarrow  \int_{T_n} \int_{U_n} u_{\star} \cdot \varphi_t dx dt,
\end{equation}

\begin{equation}
    \label{eq:weak2}
    \varepsilon \int_{T_n} \int_{U_n} u_{\varepsilon}  \cdot \triangle \varphi dx dt  \rightarrow 0,
\end{equation}

and
\begin{equation}
    \label{eq:weak3}
    \int_{T_n} \int_{U_n} \Laq (u_{\varepsilon}  \times \Laq \varphi  ) \cdot u_{\varepsilon} dx dt  \rightarrow  \int_{T_n} \int_{U_n} \Laq (u_{\star}  \times \varphi ) \cdot \Laq u_{\star}dx dt .
\end{equation}

We first introduce the following compactness lemma for the $\dot{H}^{\frac{1}{2}}(\R)$ space.


\begin{Lemma}(Theorem 7.1 in \cite{di2012hitchhiker})
    \label{thm:frac}
    Let $s\in (0,1)$, $p\in [1,\infty)$, and $q\in [1,p)$, $\Omega \subset \R^n$ be a bounded extension domain of $W^{s,p}$ and $\mathcal{F}$ be a bounded subset of $L^p(\Omega)$. Suppose that 
    \begin{equation*}
        \sup_{f\in \mathcal{F}} \|f\|_{\dot{W}^{s,p}} <\infty
    \end{equation*}
    Then $\mathcal{F}$ is pre-compact in $L^q(\Omega)$.
\end{Lemma}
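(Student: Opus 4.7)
The plan is to deduce this compactness statement from the classical Fréchet--Kolmogorov (Kolmogorov--Riesz) criterion in $L^p$ by exploiting the fractional seminorm to control $L^p$--translations, and then upgrade from precompactness in $L^p$ (which is needed as an intermediate step) to precompactness in $L^q$ via the embedding $L^p(\Omega) \hookrightarrow L^q(\Omega)$ for bounded $\Omega$ and $q < p$.

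First I would use the extension property of $\Omega$ to replace each $f \in \mathcal{F}$ with $\tilde{f} \in W^{s,p}(\R^n)$, compactly supported in a fixed bounded neighborhood $\Omega'$ of $\Omega$, and satisfying $\|\tilde{f}\|_{W^{s,p}(\R^n)} \leq C\|f\|_{W^{s,p}(\Omega)}$ with $C$ independent of $f$. This reduces the problem to showing that the extended family $\tilde{\mathcal{F}} = \{\tilde{f} : f\in \mathcal{F}\}$ is precompact in $L^p(\Omega')$, with precompactness in $L^q(\Omega)$ following by restriction and Hölder.

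The heart of the argument is the uniform translation estimate
\begin{equation*}
\|\tilde{f}(\cdot + h) - \tilde{f}\|_{L^p(\R^n)}^{p} \leq C |h|^{sp}\, [\tilde{f}]_{W^{s,p}(\R^n)}^{p},
\end{equation*}
which I would prove by writing $|h|^{n+sp}|\tilde{f}(x+h)-\tilde{f}(x)|^p$ as an average over a ball of radius $|h|$ and applying Fubini to recognize the Gagliardo seminorm. Combined with tightness (since all $\tilde{f}$ are supported in $\Omega'$) and uniform boundedness in $L^p$, this verifies all three hypotheses of the Fréchet--Kolmogorov compactness theorem, giving precompactness of $\tilde{\mathcal{F}}$ in $L^p(\R^n)$. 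An equivalent route is to mollify: the estimate above yields $\|\eta_\delta \ast \tilde{f} - \tilde{f}\|_{L^p} \leq C\delta^{s}\sup_{\mathcal{F}}[\tilde{f}]_{W^{s,p}}$ uniformly in $\tilde{\mathcal{F}}$, while for fixed $\delta$ the family $\{\eta_\delta \ast \tilde{f}\}$ is uniformly bounded and equicontinuous, hence relatively compact in $C^0(\overline{\Omega'})$ by Arzel\`a--Ascoli; a diagonal argument then delivers precompactness of $\tilde{\mathcal{F}}$ in $L^p(\Omega')$.

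Finally, from any sequence $\{f_k\} \subset \mathcal{F}$ one extracts a subsequence with $\tilde{f}_{k_j} \to g$ in $L^p(\Omega')$, hence $f_{k_j} \to g|_\Omega$ in $L^p(\Omega)$, and the embedding $L^p(\Omega)\hookrightarrow L^q(\Omega)$ (valid since $\Omega$ has finite measure and $q < p$) upgrades this to convergence in $L^q(\Omega)$, establishing precompactness. The main obstacle is the translation estimate: getting the correct $|h|^{sp}$ dependence from the Gagliardo seminorm for non-integer $s\in(0,1)$ requires the averaging trick above rather than the direct fundamental-theorem-of-calculus argument that works for $W^{1,p}$.
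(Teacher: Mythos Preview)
The paper does not prove this lemma at all: it is quoted verbatim as Theorem~7.1 of \cite{di2012hitchhiker} and used as a black box. Your outline is correct and in fact reproduces the standard proof given in that reference --- extension to $\R^n$, the translation estimate $\|\tilde f(\cdot+h)-\tilde f\|_{L^p}^p \le C|h|^{sp}[\tilde f]_{W^{s,p}}^p$ via the averaging/Gagliardo trick, then Fr\'echet--Kolmogorov, and finally the trivial embedding $L^p(\Omega)\hookrightarrow L^q(\Omega)$. One minor point you glossed over: the extension operator does not by itself produce functions with common compact support in a fixed $\Omega'$; you need to multiply by a fixed cutoff, which is harmless but requires knowing that multiplication by a smooth compactly supported function is bounded on $W^{s,p}$.
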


From the above lemma, we know that $u_{\varepsilon}(t,\cdot)$ is pre-compact in $L^2(U_n)$ for any $t\in T_n$. Hence we know that there exist a $u_{\star}(t,\cdot) \in \dot{H}^{\frac{1}{2}}(U_n)$ s.t. $u_{\varepsilon}(t,\cdot) \rightarrow u_{\star}(t,\cdot)$ in $L^2(U_n)$ for any $t\in T_n$ as $\varepsilon \rightarrow 0$.

In order to show \eqref{eq:weak1}, we further split $u_\varepsilon$ in the frequency domain. Given a $N>0$, we have 
\[
u_{\varepsilon}= u_{<N,\varepsilon} + u_{\geq N,\varepsilon}.
\]

We consider the low-frequency part first. We will show that $u_{<N,\varepsilon}$ has better space-time regularity which implies the strong convergence of $u_{<N,\varepsilon}$ in $L^2_{t,x}(T_n \times U_n)$.

\begin{Lemma}
    \label{lem:timereg}
For $T>0$, $U \subseteq \R$ be a bounded domain and $N>0$. For the collections of solutions $u_{\varepsilon} \in C_0^t([0,T], \dot{H}^\frac{1}{2}(U))$ of equation \eqref{eq:reg}, we know there exists a subsequence s.t. $u_{<N,\varepsilon_k} \rightarrow u_{<N,\star}$ in $L_{t,x}^2([0,T]\times U)$. 
\end{Lemma}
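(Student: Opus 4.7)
The natural strategy is an Aubin--Lions (Simon) compactness argument on the bounded cylinder $[0,T]\times U$. Three ingredients are required: a uniform spatial bound on $u_{<N,\varepsilon}$ in $L^\infty_t \dot{H}^{1/2}(U)$, a uniform bound on $\partial_t u_{<N,\varepsilon}$ in a weaker norm such as $L^\infty_t L^2(U)$, and the compact embedding $\dot{H}^{1/2}(U)\hookrightarrow L^2(U)$. The last of these is supplied by Lemma \ref{thm:frac} applied with $p=2$, $q\in[1,2)$, since $U$ is bounded. The spatial bound is immediate from the critical-energy identity \eqref{eq:h12uni}: $\sup_{t\ge 0}\|u_\varepsilon(t,\cdot)\|_{\dot{H}^{1/2}(\R)}\le \|u_0\|_{\dot{H}^{1/2}(\R)}$ uniformly in $\varepsilon$, which passes to $u_{<N,\varepsilon}=P_{<N}u_\varepsilon$ and restricts to $U$; combined with the uniform $L^\infty_{t,x}$ bound from Theorem \ref{thm:maximum_v}, this controls $u_{<N,\varepsilon}$ in $L^\infty_t\bigl(\dot{H}^{1/2}\cap L^\infty\bigr)(U)$.

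The main obstacle is the time-derivative estimate. From \eqref{eq:reg} one has
\begin{equation*}
    \partial_t u_{<N,\varepsilon}=\varepsilon\Delta u_{<N,\varepsilon}+P_{<N}\bigl(u_\varepsilon\times \Lah u_\varepsilon\bigr).
\end{equation*}
For the diffusion piece the one-dimensional Bernstein inequality gives $\|\Delta P_{<N} f\|_{L^2}\lesssim N^{3/2}\|f\|_{\dot{H}^{1/2}}$, so $\|\varepsilon\Delta u_{<N,\varepsilon}\|_{L^2}\lesssim \varepsilon N^{3/2}\|u_0\|_{\dot{H}^{1/2}}$, uniformly bounded for $\varepsilon\in(0,1]$. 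The nonlinear piece is trickier, since only $\Laq u_\varepsilon$ is controlled in $L^2$ uniformly in $\varepsilon$, not $\Lah u_\varepsilon$. I would estimate it by duality: for any $\psi\in L^2$ with $\|\psi\|_{L^2}\le 1$, move the projector onto $\psi$ and then shift half a derivative via the same cyclic/self-adjointness identity that underlies \eqref{weaksol}, obtaining
\begin{equation*}
    \int_\R P_{<N}\bigl(u_\varepsilon\times \Lah u_\varepsilon\bigr)\cdot\psi\,dx=-\int_\R \Laq u_\varepsilon\cdot \Laq\bigl(u_\varepsilon\times P_{<N}\psi\bigr)\,dx.
\end{equation*}
Applying Cauchy--Schwarz, the fractional Leibniz rule, the uniform bounds $\|u_\varepsilon\|_{L^\infty}, \|u_\varepsilon\|_{\dot{H}^{1/2}}\lesssim 1$, and the one-dimensional Bernstein estimates $\|P_{<N}\psi\|_{L^\infty},\|P_{<N}\psi\|_{\dot{H}^{1/2}}\lesssim N^{1/2}\|\psi\|_{L^2}$ then yields
\begin{equation*}
    \bigl\|P_{<N}(u_\varepsilon\times \Lah u_\varepsilon)\bigr\|_{L^2(\R)}\le C(N,u_0),
\end{equation*}
uniform in $\varepsilon$. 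Taking the supremum over $t\in[0,T]$ gives $\partial_t u_{<N,\varepsilon}\in L^\infty_t L^2_x$ uniformly in $\varepsilon$.

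With these three ingredients, the Aubin--Lions compactness theorem applied with $X=\dot{H}^{1/2}(U)$ and $Y=L^2(U)$ shows that $\{u_{<N,\varepsilon}\}_{\varepsilon>0}$, viewed as a bounded family of maps $[0,T]\to L^2(U)$, is precompact in $L^2([0,T]\times U)$. Extracting a subsequence $\varepsilon_k\to 0$ therefore produces a limit $u_{<N,\star}\in L^2_{t,x}([0,T]\times U)$ with $u_{<N,\varepsilon_k}\to u_{<N,\star}$ strongly in $L^2_{t,x}([0,T]\times U)$, which is the claim of the lemma.
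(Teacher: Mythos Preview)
Your proof is correct and follows the same overall strategy as the paper: project the equation to low frequencies, bound $\partial_t u_{<N,\varepsilon}$ in $L^2$ uniformly in $\varepsilon$, and then invoke a compactness theorem. The technical implementations differ in two places, both of which are worth recording.

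For the nonlinear term $P_{<N}(u_\varepsilon\times\Lah u_\varepsilon)$, the paper works directly: it uses Bernstein to pass from $L^2$ to $L^1$, then splits $\Lah u_\varepsilon$ into a piece at frequency $\lesssim N$ (bounded crudely by $\|u_\varepsilon\|_{L^2_{t,x}}$) and a high--high paraproduct piece (bounded by Cauchy--Schwarz and the uniform $\dot H^{1/2}$ energy). Your duality argument, moving $P_{<N}$ onto the test function and then using the cyclic identity $(u\times\Lah u)\cdot\psi=-\Laq u\cdot\Laq(u\times\psi)$ together with the fractional Leibniz rule, is cleaner and more structural; it avoids the paraproduct bookkeeping entirely and makes transparent why only the $\dot H^{1/2}$ and $L^\infty$ bounds are needed.

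For the compactness step, the paper first upgrades the spatial regularity of $u_{<N,\varepsilon}$ to $H^1$ via Bernstein (which is harmless at bounded frequency) so that $\nabla_{t,x}u_{<N,\varepsilon}$ is uniformly in $L^2_{t,x}$, and then applies the ordinary Rellich embedding $H^1_{t,x}\hookrightarrow L^2_{t,x}$ on the bounded cylinder. You instead keep the spatial regularity at $\dot H^{1/2}$ and invoke Aubin--Lions with the compact embedding $H^{1/2}(U)\hookrightarrow L^2(U)$. Both are valid; the paper's route is slightly more elementary (no Aubin--Lions needed), while yours is the standard evolution-PDE template and would generalize more readily if the frequency cutoff were replaced by a softer regularization. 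One small remark: Lemma~\ref{thm:frac} as stated in the paper only gives precompactness in $L^q$ for $q<p=2$, but the underlying result (the critical case $sp=n$) does give compactness in $L^2(U)$, so your invocation is mathematically sound even if the citation is imprecise.
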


\begin{proof}

For $u_\varepsilon$ satisfies \eqref{eq:reg}, we have

\begin{equation}
    P_{<N} \pt u_{\varepsilon} - \varepsilon P_{<N}\triangle u= P_{<N} (u_{\varepsilon} \times \Lah u_{\varepsilon} )
\end{equation}


Hence, we have
\begin{equation}
    \begin{split}
    &\| P_{<N} \pt u_{\varepsilon}\|_{L_t^2 L_x^2} \\
    &\leq \varepsilon \| P_{<N} \triangle u_{\varepsilon}\|_{L_t^2 L_x^2} + \| P_{<N} (u_{\varepsilon} \times \Lah u_{\varepsilon} )\|_{L_t^2 L_x^2}\\
    \end{split}
\end{equation}

We know that

\begin{equation}
    \begin{split}
        &\| P_{<N} \triangle u_{\varepsilon}\|_{L_t^2 L_x^2(T\times U)}\\
    &\leq N^2 \| u_{\leq N,\varepsilon}\|_{L_t^2 L_x^2(T\times U)}\\
    &\leq C.
    \end{split}
\end{equation}

Next, we consider the nonlinear part. We use Bernstein's inequality to obtain

\begin{equation}
    \begin{split}
&\big\|P_{<N}(u_{\epsilon}\times (-\triangle)^{1/2}u_{\epsilon})\big\|_{L_x^2}\\
&\leq C2^{\frac{N}{2}}\cdot \big\|P_{<N}(u_{\epsilon}\times (-\triangle)^{1/2}u_{\epsilon})\big\|_{L_x^1}. 
    \end{split}
\end{equation}

We further split this into two terms:
\begin{equation}
    \begin{split}
 &P_{<N}(u_{\epsilon}\times (-\triangle)^{1/2}u_{\epsilon})\\
=& P_{<N}(u_{\epsilon}\times P_{<N+10}(-\triangle)^{1/2}u_{\epsilon})\\
&+P_{<N}(u_{\epsilon}\times P_{\geq N+10}(-\triangle)^{1/2}u_{\epsilon})
    \end{split}
\end{equation}

We estimate the first term by 
\begin{equation}
    \begin{split}
        &\| P_{<N}(u_{\epsilon}\times P_{<N+10}(-\triangle)^{1/2}u_{\epsilon})\|_{L_t^2 L_x^1} \\
        &\lesssim N 2^N  \|u_{\epsilon}\|^2_{L_t^2L_x^2}
    \end{split}
\end{equation}

For the second term, we write further

\begin{equation}
    \begin{split}
        &P_{<N}(u_{\epsilon}\times P_{\geq N+10}(-\triangle)^{1/2}u_{\epsilon})\\
=& \sum_{k_1 = k_2+O(1)\geq N}P_{<N}(P_{k_1}u_{\epsilon}\times P_{k_2}(-\triangle)^{1/2}u_{\epsilon}).
    \end{split}
\end{equation}

For a fixed time $t$, we have

\begin{equation}
    \begin{split}
        &\sum_{k_1 = k_2+O(1)\geq N}\big\|(P_{k_1}u_{\epsilon}\times P_{k_2}(-\triangle)^{1/2}u_{\epsilon})\big\|_{L_x^1}\\
         &\leq \sum_{k_1 = k_2+O(1)\geq N} \big\|\Laq u_{k_1,\epsilon}\big\|_{L_x^2}\cdot \big\|\Laq u_{k_2,\epsilon}\big\|_{L_x^2} \\
        &\leq \sum_{k\geq N} \big\|(-\triangle)^{1/4} u_{k,\epsilon}\big\|_{L_x^2}^2 \\
        &\leq C \|u_0\|_{\dot{H}^\frac{1}{2}}.
    \end{split}
\end{equation}

Then we can conclude that

\begin{equation}
    \begin{split}
        &\| P_{<N} (u_{\varepsilon} \times \Lah u_{\varepsilon} )\|_{L_t^2 L_x^2(T\times U)}\leq C,
    \end{split}
\end{equation}
where $C$ is a constant depending on $N$ and $u_0$ but independent of $\varepsilon$. Therefore, we know that $\|P_{<N}\pt u_{\varepsilon}\| \in L_t^2L^2_x([0,T]\times U)$. 

Since our solution $u_\varepsilon$ is also in $\dot{H}^1$, we know that $\nabla_{t,x} u_{<N,\varepsilon}$ are uniformly bounded in  $L^2_{t,x}(T_n \times U_n)$. Moreover, $\dot{H}^1$ compactly embeds into $L^2$ on $U_n$, we conclude that there exists a subsequence $\{u_{<N,\varepsilon_k}\}$ such that 
\[
    u_{<N,\varepsilon_k} \rightarrow  u_{<N,\star}  \quad \text{in}\  L^2_{t,x}(T \times U) \ \text{as} \ \varepsilon_k \rightarrow 0.
\]

\end{proof}

For the large frequency part, we have 
\begin{equation}
    \label{eq:highcov}
    \begin{split}
        &\|u_{\geq N,\varepsilon}\|_{L_{t,x}^2(T_n \times U_n)} \leq \frac{|T_n|}{N^{\frac{1}{2}}} \|u_{\varepsilon}\|_{L_t^\infty (T_n) \dot{H}^{\frac{1}{2}}(U_n)} \leq \frac{|T_n|}{N^{\frac{1}{2}}} \|u_0\|_{\dot{H}^{\frac{1}{2}}(U_n)},
    \end{split}
\end{equation}
which is a Cauchy sequence in terms of $N$. We can also conclude a similar result for $u_{>N,\star}$.

%

Therefore, we can further subtract a subsequence of $\varepsilon$ depending on $N$, for instance, $\varepsilon_N=\frac{1}{N} \rightarrow 0$ as $N\rightarrow \infty$ to obtain that 
\begin{equation}
    \begin{split}
        \|u_{\varepsilon_N} - u_{\star}\|_{L_{t,x}^2} &\leq \|u_{\varepsilon_N} - u_{<N,\varepsilon_N}\|_{L_{t,x}^2} + \|u_{<N,\varepsilon_N}- u_{<N,\star}\|_{L_{t,x}^2} +\|u_{<N,\star} -u_\star\|_{L_{t,x}^2} \\
        &\leq \|u_{\geq N,\varepsilon_N}\|_{L_{t,x}^2}+\|u_{<N,\varepsilon_N}- u_{<N,\star}\|_{L_{t,x}^2} + \|u_{\geq N,\star}\|_{L_{t,x}^2} \\
       & \rightarrow 0, \quad \text{as } N\rightarrow \infty.
    \end{split}
\end{equation}

Thus we have

\begin{equation}
    \label{eq:gconv}
    u_{\varepsilon} \rightarrow u_{\star} \quad \text{in}\ L_{t,x}^2(T_n \times U_n).
\end{equation}

So \eqref{eq:weak1} holds locally on $T_n \times U_n$.


From the maximum principle Theorem \ref{thm:maximum_v}, we know that \eqref{eq:weak2} holds. 

For the last term \eqref{eq:weak3}, we first use the cancellation property that
\begin{equation*}
    ( \Laq  u_{\varepsilon}  \times \varphi )\cdot \Laq u_{\varepsilon}  =  0,
\end{equation*}

to reformulate \eqref{eq:weak3} as
\begin{equation}
    \label{eq:weak31}
    \begin{split}
        &\int_{T_n} \int_{\R} (u_{\varepsilon}  \times \Lah u_{\varepsilon} ) \cdot \varphi dx dt\\
        &=\int_{T_n} \int_{\R} (\varphi \times u_{\varepsilon} ) \cdot \Lah u_{\varepsilon}  dx dt\\
        &=\int_{T_n} \int_{\R} \Laq ( u_{\varepsilon}  \times \varphi) \cdot \Laq u_{\varepsilon}  \, dx dt\\
        & = \int_{T_n} \int_{\R} (\Laq ( u_{\varepsilon}  \times \varphi)-(\Laq u_{\varepsilon}\times \varphi ) ) \cdot \Laq u_{\varepsilon}  \, dx dt 
    \end{split}
\end{equation}


We know that for a fixed $\varphi \in C_c^\infty$, the frequency $\eta$ of $\hat{\varphi}$ decays fast for large $|\eta|>N_\varepsilon$.

We choose a $N>N_{\varepsilon}$, and then we split the \eqref{eq:weak31} into two parts:

\begin{align}
    &\int_{T_n} \int_{\R} (\Laq ( u_{\varepsilon}  \times \varphi)-(\Laq u_{\varepsilon}\times \varphi ) ) \cdot \Laq u_{\varepsilon}  \, dx dt \nonumber \\
    &=\int_{T_n} \int_{\R} (\Laq ( u_{<N,\varepsilon}  \times \varphi)-(\Laq u_{<N,\varepsilon}\times \varphi ) ) \cdot \Laq u_{\varepsilon}  \, dx dt \label{eq:weak3small}\\ 
    &+\int_{T_n} \int_{\R} (\Laq ( u_{\geq N,\varepsilon}  \times \varphi)-(\Laq u_{\geq N,\varepsilon}\times \varphi ) ) \cdot \Laq u_{\varepsilon}  \, dx dt \label{eq:weak3large}
\end{align}


For a given $t\in T_n$, we first consider the small frequency part \eqref{eq:weak3small}. We further separate it into two cases: $|\eta|\leq |\xi| <N$ and $|\xi|\leq |\eta|<N$. We have

\begin{equation}
    \label{eq:smallN1}
    \begin{split}
        &\|  \Laq (u_{\varepsilon, <N} \times \varphi) - (\Laq u_{\varepsilon,<N} \times \varphi)  \|_{L_x^2} \\
        &\lesssim \sum_{|\eta|<|\xi|<N}\| (|\xi+\eta|^{\frac{1}{2}}-|\xi|^{\frac{1}{2}}) \hat{u}_{\varepsilon}(\xi) \star \hat{\varphi}(\eta) \|_{L^2}\\
        &+\sum_{|\xi|<|\eta|<N}\| (|\xi+\eta|^{\frac{1}{2}}-|\xi|^{\frac{1}{2}}) \hat{u}_{\varepsilon}(\xi) \star \hat{\varphi}(\eta) \|_{L^2}\\
    \end{split}
\end{equation}

We estimate the first part as
\begin{equation}
    \label{eq:smallN2}
    \begin{split}
        & \sum_{|\eta|<|\xi|<N}\| (|\xi+\eta|^{\frac{1}{2}}-|\xi|^{\frac{1}{2}}) \hat{u}_{\varepsilon}(\xi) \star \hat{\varphi}(\eta) \|_{L_x^2}\\
        &\lesssim  \sum_{|\eta|<|\xi|<N} \| \frac{|\eta|}{|\xi|}  |\xi|^{\frac{1}{2}} \hat{u}_{\varepsilon}(\xi) \star \hat{\varphi}(\eta) \|_{L_x^2}\\
        &\lesssim  \sum_{|\eta|<|\xi|<N} \|  |\xi|^{\frac{1}{2}} \hat{u}_{<N,\varepsilon}(\xi) \star \hat{\varphi}(\eta) \|_{L^2}\\
        &\lesssim \|u_{<N,\varepsilon}\|_{L^2_x} \| \varphi \|_{L_x^1} 
    \end{split}
\end{equation}

For the second part, we have 
\begin{equation} 
    \label{eq:smallN3}
    \begin{split}
       &\sum_{|\xi|<|\eta|<N}\| (|\xi+\eta|^{\frac{1}{2}}-|\xi|^{\frac{1}{2}}) \hat{u}_{\varepsilon}(\xi) \star \hat{\varphi}(\eta) \|_{L^2}\\
        &\lesssim  \sum_{|\xi|<|\eta|<N} \| |\eta|^{\frac{1}{2}} \hat{u}_{\varepsilon}(\xi) \star \hat{\varphi}(\eta) \|_{L^2}\\
        &\lesssim \|u_{\varepsilon, <N}\|_{L_x^2} \| \varphi \|_{L_x^1}
    \end{split}
\end{equation}

Combine Lemma \ref{lem:timereg} with estimates \eqref{eq:smallN2} and \eqref{eq:smallN3}, we have
\begin{equation}
    \begin{split}
        &\|\Laq (u_{<N,\varepsilon} \times \varphi) - \Laq u_{<N,\varepsilon} \times \varphi -\big(\Laq (u_{<N,\star} \times \varphi) - \Laq u_{<N,\star} \times \varphi\big)\|_{L_{t,x}^2(T_n\times U_n)}\\
        &=\| \Laq ((u_{<N,\varepsilon}-u_{<N,\star}) \times \varphi) - \Laq (u_{<N,\varepsilon}-u_{<N,\star}) \times \varphi\|_{L_{t,x}^2(T_n\times U_n)}\\
    &\lesssim \|u_{<N,\varepsilon} - u_{<N,\star}\|_{L_{t,x}^2(T_n\times U_n)} \ \| \varphi \|_{L^1} \rightarrow 0 \quad \text{as} \ \varepsilon \rightarrow 0
    \end{split}
\end{equation}

By the a priori bound \eqref{eq:h12uni}, we know that up to a subsequence, 

\[
    \Laq u_{\varepsilon} \rightharpoonup \Laq u_{\star} \quad \text{in} \ L^2
\]

Hence, on each compact domain $T_n\times U_n$, we have

\begin{equation}
\int_{T_n} \int_{U_n} (u_{<N,\varepsilon}  \times \Lah u_{<N,\varepsilon} ) \cdot \varphi dx dt  \rightarrow \int_{T_n} \int_{U_n} (u_{<N,\star}  \times \Lah u_{<N,\star} ) \cdot \varphi dx dt
\end{equation}

Next, we consider the large frequency part \eqref{eq:weak3large}. We have a better cancellation here. 

\begin{equation}
    \begin{split}
        & \|  \Laq (u_{>N,\varepsilon} \times \varphi) - (\Laq u_{>N,\varepsilon} \times \varphi)  \|_{L_x^2} \\
        &\lesssim \sum_{|\eta|<N<|\xi|}\| (|\xi+\eta|^{\frac{1}{2}}-|\xi|^{\frac{1}{2}}) \hat{u}(\xi) \star \hat{\varphi}(\eta) \|_{L^2}\\
        &\lesssim  \sum_{|\eta|<N<|\xi|} \| \frac{|\eta|}{ |\xi|^{\frac{1}{2}}}  \hat{u}(\xi) \star \hat{\varphi}(\eta) \|_{L^2}\\
        &\lesssim  \frac{1}{N} \ \sum_{|\eta|<N<|\xi|} \|  |\xi|^{\frac{1}{2}} \hat{u}(\xi) \star |\eta| \hat{\varphi}(\eta) \|_{L^2}\\
        &\lesssim \frac{1}{N} \| u_{>N,\varepsilon} \|_{\dot{H}^{\frac{1}{2}}} \lesssim \frac{1}{N} \| u_0 \|_{\dot{H}^{\frac{1}{2}}} 
    \end{split}
\end{equation}

Integrating over $T_n$, we have
\begin{equation}
    \begin{split}
        &\|  \Laq (u_{>N,\varepsilon} \times \varphi) - (\Laq u_{>N,\varepsilon} \times \varphi)  \|_{L_{t,x}^2(T_n\times U_n)} \\
        &\lesssim \frac{|T_n|}{N} \| u_0 \|_{\dot{H}^{\frac{1}{2}}},
    \end{split}
\end{equation}
is a Cauchy sequence in terms of $N$.
 
Hence, we can further choose a subsequence of $\varepsilon_N \rightarrow 0$ as $N\rightarrow \infty$, such that
\[
    \int_{T_n} \int_{U_n} (u_{\varepsilon}  \times \Lah u_{\varepsilon} ) \cdot \varphi dx dt  \rightarrow \int_{T_n} \int_{U_n} (u_{\star}  \times \Lah u_{\star} ) \cdot \varphi dx dt
\]


Thus, we conclude that, up to a subsequence, \eqref{eq:weak3} holds. Hence we know that on each local domain $T_n\times U_n$, we have a weak solution $u_{\star}$ for the half-wave map equation \eqref{eq3:halfwave}.

\subsection{Global Weak Solution}

For each local doman $\Omega_n=T_n \times U_n$, s.t.  $\Omega_n \rightarrow \R^{1+1}$ as $n \rightarrow \infty$, we know that for each $\Omega_n$, there exist a subsequence $\varepsilon_{n,k} \rightarrow 0$, s.t. $u_{\varepsilon_{n,k}}$ weakly converging to a $u_{n,\star}$ in $L_t^2(T_n, \dot{H}^{\frac{1}{2}}(U_n))$.

We then use Cantor's diagonal argument to conclude that there exists a diagonal sequence $\varepsilon_{n,n}$, s.t.
$u_{\varepsilon_{n,n}}$ weakly converging to a $u_{\star}$ in $L_{t,loc}^2([0,\infty), \dot{H}^{\frac{1}{2}} (\R))$. For the first $\Omega_1$, we pick a subsequence $\varepsilon_{1,n}$, s.t. $u_{\varepsilon_{1,n}}$ weakly converging to $u_{1,\star}$ in $L_t^2 (T_1,\dot{H}^{\frac{1}{2}} (U_1))$. Next, we consider a subsequence $\varepsilon_{2,n}$ of $\varepsilon_{1,n}$, s.t.  $u_{\varepsilon_{2,n}}$ weakly converging to $u_{2,\star}$ in $L_t^2 (T_2,\dot{H}^{\frac{1}{2}} (U_2))$. We keep doing this process for all $n\in \mathbb{N}$. Finally, we pick a diagonal sequence $\varepsilon_{n,n}$ so that $u_{\varepsilon_{n,n}}$ weakly converging to $u_{\star}$ in $L_{t,loc}^2([0,\infty), \dot{H}^{\frac{1}{2}} (\R))$. Therefore, we obtained our global weak solution $u_{\star}$.


Lastly, we verify that $u_{\star}$ maps into $S^2$. 
We consider the equation \eqref{eq:v}. By the heat kernel, we have 
\begin{equation}
    v_\varepsilon = -\varepsilon K_\varepsilon \star |\nabla u_\varepsilon|^2 +1.
\end{equation}

Since $u_\varepsilon(t) \in \dot{H}_x^1$, we know that $\nabla u_\varepsilon(t) \in L_x^2$ for all $t \in [0,\infty)$. So as $\varepsilon \rightarrow 0$, we know that $v_\varepsilon(t) \rightarrow 1$ for all $t \in [0,\infty)$.

From \eqref{eq:gconv}, we know that $u_\varepsilon \rightarrow u_\star$ strongly in $L_{t,x}^2(T_n\times U_n)$. Hence, we have

\begin{equation}
    u_{\varepsilon} \rightarrow u_{\star} \ \text{almost everywhere on } T_n \times U_n.
\end{equation}

With the same diagonal argument for $u_\varepsilon$ above, we have the diagonal subsequence $\varepsilon_{n,n} \rightarrow 0$, s.t.
\[
  u_{\varepsilon_{n,n}} \cdot u_{\varepsilon_{n,n}} \rightarrow u_\star \cdot u_\star = 1 \ \text{almost everywhere on } \R^{1+1},
\]
which implies that $u_\star$ maps into $S^2$.


\newpage

\newpage
\bibliographystyle{plain} 
\bibliography{paper}
\end{document}